\documentclass[12pt]{amsart} 
\setlength{\textheight}{23cm}
\setlength{\textwidth}{16cm}
\setlength{\oddsidemargin}{0cm}
\setlength{\evensidemargin}{0cm}
\setlength{\topmargin}{0cm}
\usepackage{amssymb}
\usepackage{comment}
\usepackage{amsmath}
\usepackage{enumitem}
\usepackage[pdftex]{hyperref}
\usepackage{mathtools}
\usepackage{wrapfig}
\usepackage{tikz}
\usetikzlibrary{intersections}
\usetikzlibrary{calc}
\usetikzlibrary{arrows.meta}
%
%
%
\theoremstyle{plain} 
\newtheorem{theorem}{\indent\sc Theorem}[section]
\newtheorem{lemma}[theorem]{\indent\sc Lemma}
\newtheorem{corollary}[theorem]{\indent\sc Corollary}
\newtheorem{proposition}[theorem]{\indent\sc Proposition}

\theoremstyle{definition} 
\newtheorem{definition}[theorem]{\indent\sc Definition}
\newtheorem{remark}[theorem]{\indent\sc Remark}
\newtheorem{example}[theorem]{\indent\sc Example}
\newtheorem{notation}[theorem]{\indent\sc Notation}

%

%


\newcommand{\R}{\mathbb{R}}

\newcommand{\Z}{\mathbb{Z}}

\newcommand{\norm}[1]{\left\lVert#1\right\rVert}
\newcommand{\Cay}{\Gamma}

\usepackage{color}	
\begin{document}

\title[Free products of coarsely convex spaces]
{Free products of coarsely convex spaces and the coarse Baum--Connes conjecture} 

\author[Tomohiro Fukaya]{Tomohiro Fukaya} 

\author[Takumi Matsuka]{Takumi Matsuka} 



\renewcommand{\thefootnote}{\fnsymbol{footnote}}
\footnote[0]{2020\textit{ Mathematics Subject Classification}.
Primary 51F30; Secondary 20F65, 58B34.}

\keywords{ 
Free products, coarsely convex spaces, the coarse Baum--Connes conjecture.
}
\thanks{ 
The first author was supported by JSPS KAKENHI Grant number JP19K03471. 
The second author was supported by JST, the establishment of university fellowships towards the creation of science technology innovations, Grant number JPMJFS2139.
}
\address{Tomohiro Fukaya \endgraf
Department of Mathematical Sciences \endgraf
Tokyo Metropolitan University \endgraf
Minami-osawa Hachioji Tokyo 192-0397 \endgraf
Japan
}
\email{tmhr@tmu.ac.jp}

\address{Takumi Matsuka \endgraf
Department of Mathematical Sciences \endgraf
Tokyo Metropolitan University \endgraf
Minami-osawa Hachioji Tokyo 192-0397 \endgraf
Japan
}
\email{takumi.matsuka1@gmail.com}


\maketitle

\begin{abstract}
The first author and Oguni introduced a wide class of metric spaces, called coarsely convex spaces.
It includes Gromov hyperbolic metric spaces, CAT(0) spaces, systolic complexes, proper injective metric spaces.
We introduce the notion of free products of metric spaces
and show that free products of symmetric geodesic coarsely convex spaces are also symmetric geodesic coarsely convex spaces.
As an application, it follows that free products of symmetric geodesic coarsely convex spaces satisfy the coarse Baum--Connes conjecture.
\end{abstract}

\section*{Introduction} 
The first author and Oguni \cite{FO} introduced a wide class of metric spaces, called \textit{coarsely convex spaces}.
Coarsely convex spaces can be regarded as the counterpart of simply connected, complete, Riemannian manifolds with non-positive sectional curvature in coarse geometry.
This class includes many classes of metric spaces.
In particular, a class of \textit{symmetric geodesic coarsely convex spaces} is an important subclass of coarsely convex spaces.
\par Let $(X,d_X)$ be a geodesic metric space.
Let $E\geq 1$ and $C \geq 0$ be constants.
Let $\mathcal{L}$ be a family of geodesic segments.
The metric space $X$ is a symmetric geodesic $(E,C,\mathcal{L})$-coarsely convex space, if $E$, $C$, and $\mathcal{L}$ satisfy the following conditions: 
\begin{enumerate}
\renewcommand{\labelenumi}{\roman{enumi}).}
\item For all $u,v \in X$, there exists a geodesic segment $\gamma \in \mathcal{L}$ with $\gamma : [0,a] \to X$ such that $\gamma(0)=u$ and $\gamma(a)=v$.
\item Let $\gamma, \eta \in \mathcal{L}$ be geodesic segments with $\gamma : [0,a] \to X$ and $\eta : [0,b] \to X$.
        Let $\gamma(0)=\eta(0)$.
        For all $t \in [0,a]$, $s \in [0,b]$, and $c\in [0,1]$, we have
\begin{equation*}
d_X(\gamma(ct),\eta(cs)) \leq cEd_X(\gamma(t),\eta(s)) + C.
\end{equation*}
\item For a map $\gamma : [a,b] \to X$, we denote by $\gamma^{-1}$, the map $\gamma^{-1}:[a,b] \to X$ defined by $\gamma^{-1}(t)\coloneqq\gamma(b-(t-a))$ for $ t \in [a,b]$.
        The geodesic segment $\gamma^{-1}$ is in $\mathcal{L}$ for each $\gamma \in \mathcal{L}$.
\end{enumerate}
We say that $X$ is a \textit{symmetric geodesic coarsely convex space} if there exist $E$, $C$, and a family of geodesic segments $\mathcal{L}$
such that $X$ is a symmetric geodesic $(E,C,\mathcal{L})$-coarsely convex space.
A \textit{coarsely convex group} is a group $G$ acting properly and cocompactly on a coarsely convex space.
The classes of metric spaces listed in Table \ref{Tab1} are examples of symmetric geodesic coarsely convex spaces.
\begin{table}[h]
\begin{itemize}
\item Geodesic Gromov hyperbolic metric spaces.
\item CAT(0) spaces.
\item Systolic complexes \cite{OP}.
\item Proper injective metric spaces \cite{DL}, especially, the
      injective hulls of locally finite coarsely Helly graphs \cite{Helly}.
\end{itemize}
\caption{Examples of symmetric geodesic coarsely convex spaces.}
\label{Tab1}
\end{table}
\par Pisanski and Tucker \cite{PT} introduced free products of Cayley graphs,
and Bridson and Haefliger \cite[Theorem II.11.18]{BH} constructed metric spaces on which free products of groups act properly and cocompactly.
By slightly modifying their construction, we define \textit{free products of metric spaces with nets}, 
and we obtain the following result.
\begin{theorem}\label{IFM}
Let $X$ and $Y$ be metric spaces with nets.
If $X$ and $Y$ are symmetric geodesic coarsely convex spaces,
then the free product $X*Y$ is a symmetric geodesic coarsely convex space.
\end{theorem}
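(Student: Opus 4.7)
The plan is to define a family $\mathcal{L}_{X*Y}$ of geodesic segments by concatenating apartment geodesics along the canonical combinatorial path in the tree of apartments, and then to verify the three axioms for symmetric geodesic coarse convexity. The construction of $X*Y$ with nets realizes it as a union of isometrically embedded copies of $X$ and $Y$ (``apartments'') glued at net points (``portals''), with the adjacency graph of apartments forming a tree; consequently, for each ordered pair $(u,v)$ there is a unique minimal sequence of apartments $V_0, \ldots, V_n$ with $u \in V_0$, $v \in V_n$, and portals $x_i \in V_{i-1}\cap V_i$. Setting $x_0 := u$ and $x_{n+1} := v$, I would define a geodesic from $u$ to $v$ by concatenating the $\mathcal{L}_X$- or $\mathcal{L}_Y$-geodesic from $x_i$ to $x_{i+1}$ inside each $V_i$, arclength-parametrized, and making consistent choices depending only on endpoints so that any two concatenations sharing a portal subsequence agree pointwise on the shared segment. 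Axioms (i) and (iii) then follow immediately from the construction and from the corresponding axioms for $X$ and $Y$ applied piecewise.

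For axiom (ii), let $\gamma, \eta \in \mathcal{L}_{X*Y}$ share a basepoint $p$. The tree structure and the consistency of $\mathcal{L}_{X*Y}$ force the two geodesics to coincide on an initial interval $[0, T]$ and then diverge inside a common \emph{branch apartment} $V_k$ starting at the \emph{branch point} $x_k := \gamma(T) = \eta(T)$. I verify the inequality $d_{X*Y}(\gamma(ct),\eta(cs)) \leq cE\, d_{X*Y}(\gamma(t),\eta(s)) + C$ by case analysis on the location of $ct, cs$ relative to $T$ and the subsequent portals along $\gamma$ and $\eta$. In the easy cases --- both $ct, cs \leq T$ (common prefix); exactly one of them is $\leq T$; or both are $>T$ with $\gamma(ct), \eta(cs)$ lying in disjoint sub-trees past $x_k$ --- both $d_{X*Y}(\gamma(ct), \eta(cs))$ and $d_{X*Y}(\gamma(t), \eta(s))$ can be computed exactly via the tree (through $x_k$), and the inequality reduces to a direct comparison of arclength parameters, yielding $E = 1$ and $C = 0$.

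The decisive case is when both $\gamma(ct)$ and $\eta(cs)$ lie inside $V_k$ past $x_k$. Here I would invoke coarse convexity of $V_k$ applied to the shifted geodesics $\tilde\gamma(u) := \gamma(T + u)$ and $\tilde\eta(u) := \eta(T + u)$, each starting at $x_k$. The main obstacle is that applying the axiom in $V_k$ with scale factor $c$ controls the distance between $\tilde\gamma(c(t-T))$ and $\tilde\eta(c(s-T))$, whereas the points I must bound, $\gamma(ct) = \tilde\gamma(ct - T)$ and $\eta(cs) = \tilde\eta(cs - T)$, correspond to parameters differing by $(1-c)T$ --- a quantity not \emph{a priori} controlled by $d_{X*Y}(\gamma(t), \eta(s))$. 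My plan to handle this is to exploit that the case hypothesis $ct, cs > T$ forces $T < c(t+s)/2$, and to combine this with a refined estimate comparing $d_{X*Y}(\gamma(t), \eta(s))$ with the tree length $(t-T) + (s-T)$, so that the parameter-shift error is absorbed into a slightly enlarged multiplicative constant $E'$ and additive constant $C'$ depending only on $E_X, C_X, E_Y, C_Y$ and the net data. Combining the decisive case with the easy ones then yields coarse convexity of $X*Y$ with uniform constants.
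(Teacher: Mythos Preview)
Your construction of $\mathcal{L}_{X*Y}$ by concatenation along the tree of sheets, and the case split on the position of $\gamma(ct),\eta(cs)$ relative to the branch point and the exit portals, is exactly the paper's architecture. The easy cases are fine; your direct parameter comparison in fact yields $E=1$, which is slightly sharper than the paper's factor $2$ in its corresponding case.

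The gap is in the decisive case. Applying the apartment axiom at the \emph{global} scale $c$ and then trying to absorb the shift $(1-c)T$ cannot work with uniform constants. Concretely: take $V_k=\mathbb{R}^2$, common prefix of length $T=10^6$, $t=s=2\cdot 10^6$, $c=0.6$, and endpoints $\gamma(t),\eta(s)\in V_k$ at distance $1$. Then $(1-c)T=4\cdot 10^5$ while $c\,d(\gamma(t),\eta(s))=0.6$; your inequality $T<c(t+s)/2$ only bounds $(1-c)T$ by a multiple of $t+s$, which is unrelated to the endpoint distance. There is a second issue: when $\gamma(t),\eta(s)$ lie beyond $V_k$, the shifted paths $\tilde\gamma,\tilde\eta$ are not even defined at the parameters $c(t-T),c(s-T)$ you want to use.

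The paper's resolution is the missing idea. Writing $p=\gamma(c_0t)=\eta(c_0's)$ for the branch point and $a=\gamma(c_1t)$, $b=\eta(c_1's)$ for the exit portals (or the endpoints themselves, if they stay in $V_k$), one first proves the key inequality
\[
d_*(a,b)\;\le\; c_1'\,d_*\bigl(\gamma(t),\eta(s)\bigr),
\]
a short computation from the tree decomposition of $d_*(\gamma(t),\eta(s))$ together with the triangle inequality through $p$. Then one applies (sgCC2) inside $V_k$ not at scale $c$ but at the \emph{internal} scale $\lambda=(c-c_0')/(c_1'-c_0')$, and to the two geodesics $[p,a]$, $[p,b]$ rather than $[p,\gamma(t)]$, $[p,\eta(s)]$. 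This lands exactly on $\eta(cs)$ and on an auxiliary point $a'\in[p,a]$, and the key inequality converts the resulting $\lambda E\,d_*(a,b)$ into at most $cE\,d_*(\gamma(t),\eta(s))$. A separate short estimate bounds $d_*(a',\gamma(ct))\le 2c\,d_*(\gamma(t),\eta(s))$, giving final constants $E_*=E+2$.

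Two smaller points. You omit the mixed case ($\gamma(ct)\in V_k$ but $\eta(cs)$ already past its portal $b$); the paper reduces it to the decisive case by replacing $t,s$ with $c_1t,c_1's$. And since $\mathcal{L}_X,\mathcal{L}_Y$ need not be prefix-closed, restrictions $\gamma|_{[0,t']}$ are generally not in $\mathcal{L}_{X*Y}$, so (sgCC2) for arbitrary $t'\le t$, $s'\le s$ is not automatic; the paper closes this with a short fellow-traveller argument showing the chosen geodesic to $\gamma(t')$ stays within $C$ of $\gamma|_{[0,t']}$, at the cost of enlarging the additive constant to $3C$.
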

\par Let $X$ be a proper metric space.
The \textit{coarse assembly map} is a homomorphism from the \textit{coarse K-homology} of $X$ to the $K$-theory of the \textit{Roe algebra} of $X$.
The \textit{coarse Baum--Connes conjecture} states that for ``nice'' proper metric spaces, the coarse assembly maps are isomorphisms.
The first author and Oguni \cite[Theorem 1.3]{FO} showed that for proper coarsely convex spaces, the coarse Baum--Connes conjecture holds.
Combining this result and Theorem \ref{IFM}, we obtain the following
\begin{theorem}\label{IFM2}
Let $X$ and $Y$ be proper metric spaces with nets.
If $X$ and $Y$ are symmetric geodesic coarsely convex spaces,
then the free product $X*Y$ satisfies the coarse Baum--Connes conjecture.
\end{theorem}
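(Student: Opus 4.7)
The plan is to derive Theorem \ref{IFM2} as an essentially immediate consequence of Theorem \ref{IFM} together with the Fukaya--Oguni result \cite[Theorem 1.3]{FO}, which asserts that every proper coarsely convex space satisfies the coarse Baum--Connes conjecture. So the argument will have three steps, with the bulk of the real work isolated into the middle one.

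First, I would invoke Theorem \ref{IFM} directly. Since $X$ and $Y$ are assumed to be metric spaces with nets and to be symmetric geodesic coarsely convex spaces, that theorem provides constants $E \geq 1$, $C \geq 0$ and a family of geodesic segments $\mathcal{L}$ in $X*Y$ witnessing that $X*Y$ is itself a symmetric geodesic coarsely convex space. No additional work is needed at this step.

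Second, and this is where the real obstacle lies, I need to verify that $X*Y$ is a \emph{proper} metric space, because the hypothesis of \cite[Theorem 1.3]{FO} requires properness. The construction of $X*Y$ (adapted from \cite[Theorem II.11.18]{BH}) glues countably many isometric copies of $X$ and $Y$ along points coming from the chosen nets, indexed by reduced words alternating between the two nets. To establish properness, I would fix a basepoint $x_0 \in X*Y$ and a radius $R > 0$ and bound the closed ball $\overline{B}(x_0,R)$ as follows. The copies of $X$ and $Y$ are arranged along a tree whose edges have length bounded below by the separation constant of the nets; hence, starting from $x_0$, a path of length at most $R$ can only enter finitely many copies. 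Within each visited copy, the intersection with $\overline{B}(x_0,R)$ is a closed bounded subset of $X$ or $Y$, and therefore compact by properness of the factors. Finally, $\overline{B}(x_0,R)$ is a finite union of compact sets, hence compact, so $X*Y$ is proper.

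Third, with $X*Y$ now established as a proper symmetric geodesic coarsely convex space, \cite[Theorem 1.3]{FO} applies directly to yield that the coarse assembly map for $X*Y$ is an isomorphism, which is the coarse Baum--Connes conjecture for $X*Y$.

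The main obstacle is the second step: carefully quantifying how many glued copies of $X$ and $Y$ a bounded ball in $X*Y$ can meet. This requires the uniform discreteness of the nets and a precise understanding of how the gluing distances propagate along the tree of words underlying the free product; everything else in the proof is bookkeeping or a citation.
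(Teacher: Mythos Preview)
Your three-step plan is exactly the paper's argument: invoke Theorem~\ref{IFM} (restated there as Theorem~\ref{Ma}), prove properness of $X*Y$ (the paper's Lemma~\ref{FM}), and apply \cite[Theorem~1.3]{FO}.

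The only inaccuracy is in your justification of the properness step. You appeal to a ``separation constant of the nets'' and say the argument ``requires the uniform discreteness of the nets,'' but the paper's notion of net (Definition~\ref{def:net}) assumes neither: it only requires that the index map have finite preimage over every compact set. The correct reasoning, as in the paper, is that every connecting edge has length exactly~$1$, so a ball of radius $R$ meets sheets whose heights lie in a window of width $O(R)$; within each sheet the ball is bounded and closed, hence compact by properness of $X$ and $Y$; and then the finiteness condition on the index map (not any uniform separation) forces only finitely many adjacent sheets to be reachable from each one. With this correction your argument goes through and coincides with the paper's.
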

Let $G$ and $H$ be groups acting properly and cocompactly on $X$ and $Y$, respectively.
We can construct the free product $X$ and $Y$ with respect to their actions.
Moreover, $G*H$ acts properly and cocompactly on it.
Therefore, combining Theorem \ref{IFM2} and the \v{S}varc--Milnor Lemma, we obtain the following.
\begin{theorem}
Let $X$ and $Y$ be proper metric spaces with nets.
We suppose that $X$ and $Y$ are symmetric geodesic coarsely convex spaces.
Let $G$ and $H$ be groups acting properly and cocompactly on $X$ and $Y$, respectively.
Then $G*H$ satisfies the coarse Baum--Connes conjecture.
\end{theorem}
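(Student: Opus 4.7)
The plan is to combine Theorem~\ref{IFM2} with the \v{S}varc--Milnor Lemma and the quasi-isometry invariance of the coarse Baum--Connes conjecture. First, I would use the proper cocompact actions of $G$ and $H$ to equip $X$ and $Y$ with equivariant nets: orbits of chosen base points are uniformly discrete by properness and quasi-dense by cocompactness, so they endow $X$ and $Y$ with the structure of proper metric spaces with nets on which $G$ and $H$ act by net-preserving isometries.

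Next, I would form the free product $X*Y$ with respect to these equivariant nets. As anticipated in the paragraph preceding the theorem, the tree-like combinatorial structure of the construction naturally carries an isometric action of $G*H$, obtained by letting the factors act on the corresponding translates of $X$ and $Y$. Properness and cocompactness of this $G*H$-action should follow from the corresponding properties of the factor actions together with the Bass--Serre-type structure of the free product, which admits a fundamental domain obtained by gluing fundamental domains of $X$ and $Y$ at a single net point.

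With these facts in place, Theorem~\ref{IFM2}, applied to the proper symmetric geodesic coarsely convex spaces $X$ and $Y$ with nets, asserts that $X*Y$ satisfies the coarse Baum--Connes conjecture. The \v{S}varc--Milnor Lemma, applied to the proper cocompact action of $G*H$ on the proper geodesic space $X*Y$, then shows that $G*H$ is finitely generated and quasi-isometric to $X*Y$ equipped with any word metric. Since the coarse Baum--Connes conjecture is invariant under coarse equivalence, $G*H$ inherits the conjecture from $X*Y$.

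The main technical obstacle will be the second step: verifying that the net-based free product construction genuinely supports a proper cocompact $G*H$-action. This requires the two equivariant nets to be chosen compatibly so that the assembly of translates of $X$ and $Y$ is itself proper, geodesic, and admits a compact fundamental domain for $G*H$. Once this is established, the remainder of the argument is a clean application of the results quoted above.
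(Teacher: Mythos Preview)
Your proposal is correct and follows essentially the same route as the paper: equip $X$ and $Y$ with the orbit nets, apply Theorem~\ref{IFM2} to $X*Y$, and then transfer the conclusion to $G*H$ via the \v{S}varc--Milnor Lemma and coarse-equivalence invariance (Proposition~\ref{re1}). The one point where the paper spares itself work is your ``main technical obstacle'': rather than verifying the proper cocompact $G*H$-action on $X*Y$ by hand, the paper identifies $X*Y$ (taken with the $G$-net and $H$-net) with the space $\overline{Z}$ of Bridson--Haefliger \cite[Theorem II.11.18]{BH} and simply quotes their result (recorded here as Proposition~\ref{prop:GHactsonXY}).
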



\section{Free products of metric spaces}
To prepare for the definition of free products of metric spaces, we introduce several notations.

\begin{definition}
\label{def:net}
Let $(X,d_X)$ be a metric space and let $X_0$ be a set.
An \textit{index map} is a map $i_X: X_0 \to X$ such that for any compact subset $K \subset X$, the preimage $i_X^{-1}(K)$ is a finite set.
We choose a \textit{base point} $e_X\in i_X(X_0)$.
We call $(X_0, i_X,e_X)$ a \textit{net} of $X$.
For  $x_0 \in X_0$, we denote by $\overline{x_0}$ the image $i_X(x_0)$.
\end{definition}
\begin{example}
\label{eg:net}
Let $(X,d_X,e_X)$ be a metric space with a base point $e_X$.
Let $G$ be a group acting on $X$ by isometries.
We say that $G$ \textit{acts properly} on $X$ if for any compact subset  $B \subset X$, the set 
\begin{equation*}
\{ g \in G \mid g(B) \cap B \neq \emptyset \}
\end{equation*}
is a finite set.
We say that $G$ \textit{acts cocompactly} on $X$ if there exists a compact subset $K \subset X$ such that
\begin{equation*}
\bigcup_{g \in G} g(K)=X.
\end{equation*}
When a group $G$ acts properly and cocompactly on $X$, the orbit map $o(e_X) : G \to X, g \mapsto g(e_X)$ is an index map of $X$.
Then $(G, o(e_X),e_X)$ is a net of $X$. 
We call $(G,o(e_X),e_X)$ the \textit{G-net}.
\end{example}

\begin{remark}
 In the definition of the net in Definition~\ref{def:net}, we do not require that
 the index map is injective. Indeed, let $G$ be a group acting on a metric space $X$ 
 with base point $e_X$ as in Example~\ref{eg:net}.
 Then the orbit map $o(e_X)\colon G\to X$ is injective if and only if the stabilizer of 
 $e_X$ is trivial.
\end{remark}

Let $(X,d_X)$ and $(Y,d_Y)$ be metric spaces with nets and
let $(X_0, i_X,e_X)$ and $(Y_0, i_Y,e_Y)$ be nets of $X$ and $Y$, respectively.
We choose $\epsilon_X$ and $\epsilon_Y$ such that $i_X(\epsilon_X)=e_X$ and $i_Y(\epsilon_Y)=e_Y$, respectively.
Let $X^{*}_0=X_0 \setminus \{\epsilon_X\}$ and $Y^{*}_0=Y_0 \setminus \{\epsilon_Y\}$. 
A \textit{normal word} on $X_0^*\sqcup Y_0^*$ is a finite sequence
\begin{align*}
 w_0w_1\cdots w_n \quad (n\geq 0, w_i\in X_0^*\sqcup Y_0^*)
\end{align*}
such that for all $i$, we have
\begin{align*}
 &w_i \in X_0^* \Rightarrow w_{i+1}\in Y_0^*,\\
 &w_i \in Y_0^* \Rightarrow w_{i+1}\in X_0^*.
\end{align*}
Let $\omega = w_0w_1\cdots w_n$ be a normal word. 
We define the \textit{length} of $\omega$ to be $n+1$. We denote by $\epsilon$ 
the \textit{empty word}. We define that the length of the empty word is zero.

We define $W$ to be the set consisting of the empty word $\epsilon$ and all normal words.
We define $W_X$ to be the subset of $W$ consisting of $\epsilon$ and normal words
whose last letter  does not belong to $X_0^*$, that is,
\begin{align*}
 W_X\coloneqq& \{w_0w_1\cdots w_n\in W \mid w_n\notin X_0^*\}\bigsqcup \{\epsilon\},
\end{align*}
and we similarly define $W_Y$, that is,
\begin{align*}
 W_Y\coloneqq& \{w_0w_1\cdots w_n\in W \mid w_n\notin Y_0^*\}\bigsqcup \{\epsilon\}.
\end{align*}
We use the following disjoint union $\widetilde{X*Y}$, 
\begin{align*}
\widetilde{X*Y} \coloneqq & (W_X \times X) \bigsqcup (W_Y \times Y) \\
                     &\bigsqcup (W_X \times X_0^* \times [0,1]) \bigsqcup (W_Y \times Y_0^* \times [0,1]) \bigsqcup (\epsilon \times [0,1]).
\end{align*}
We define an equivalence relation $\sim$ on $\widetilde{X*Y}$ as follows:
\begin{itemize}
\item $(\epsilon, e_X) \sim (\epsilon, 0)$ and $(\epsilon, 1) \sim (\epsilon, e_Y)$. 
\item Let $\omega \in W_X$ and $x_0 \in X_0^*$.
      \begin{align*}
       \{\omega\} \times X \ni (\omega, \overline{x_0}) 
       &\sim 
       (\omega, x_0, 0) \in \{\omega\} \times  \{x_0\} \times [0,1],\\
       \{\omega x_0\} \times Y\ni (\omega x_0, e_Y)
       &\sim
       (\omega, x_0, 1)   \in   \{\omega\} \times  \{x_0\} \times [0,1]. 
      \end{align*}
\item Let $\tau \in W_Y$ and $y_0 \in Y_0^*$.
      we define 
      \begin{align*}
       \{\tau\} \times Y \ni (\tau, \overline{y_0}) 
       &\sim 
         (\tau, y_0, 0) \in \{\tau\} \times  \{y_0\} \times [0,1],\\
       \{\tau y_0\} \times X\ni (\tau y_0, e_X) 
       &\sim
         (\tau, y_0, 1) \in \{\tau\} \times  \{y_0\} \times [0,1].
      \end{align*}
\end{itemize}
\begin{definition}
\textit{The free product $X*Y$} is the quotient of $\widetilde{X*Y}$ 
 by the equivalent relation $\sim$. 
\end{definition}
The free product $X*Y$ consists of the following two types of components (see Figure \ref{fig 1}).
\begin{itemize}
\item The \textit{sheets} consists of 
$\{\omega\}\times X$ and $\{\tau\} \times Y$, where $\omega \in W_X$ and $\tau \in W_Y$.
For simplicity, we write $\{\omega\}\times X$ and $\{\tau\}\times Y$ as $\omega X$ and $\tau Y$, respectively.
We identify $X$ and respectively $Y$ with $\epsilon X$ and respectively 
$\epsilon Y$.
We call $\omega$ and $\tau$ \textit{index words}.
For each sheet, the \textit{height of the sheet} is the length of the index word.
\item The \textit{edges} consists of the following three:
\begin{itemize}
\item There exists an edge $\{\epsilon\}\times [0,1]$
connecting $(\epsilon, e_X) \in \{\epsilon\} \times X$ and $(\epsilon, e_Y) \in \{\epsilon\} \times Y$
\item Let $\omega \in W_X$ and $x_0 \in X_0^*$.
      Then there exists an edge $\{\omega\} \times \{x_0\}\times [0,1]$
      connecting $(\omega, \overline{x_0}) \in \{\omega\} \times X$ and $(\omega x_0, e_Y) \in \{\omega x_0\} \times Y$.
\item Let $\tau \in W_Y$ and $y_0 \in Y_0^*$.
      Then there exists an edge $\{\tau\} \times \{y_0\}\times [0,1]$
      connecting $(\tau, \overline{y_0}) \in \{\tau\} \times Y$ and $(\tau y_0, e_X) \in \{\tau y_0\} \times X$.
\end{itemize}
\end{itemize}
       \begin{figure}[h]
       \begin{tikzpicture}[scale=1]
       \draw(-1,2)--++(4,0)--++(-1,-1)--++(-4,0)--cycle; 
       \draw(9,2)--++(4,0)--++(-1,-1)--++(-4,0)--cycle; 
       \draw(3,2)node[right]{$x_0y_0 X$};
       \draw(13,2)node[right]{$y_0^{\prime}x_1^{\prime} Y$};
       \draw(0,0)--++(4,0)--++(-1,-1)--++(-4,0)--cycle; 
       \draw(8,0)--++(4,0)--++(-1,-1)--++(-4,0)--cycle; 
       \draw(4,0)node[right]{$x_0 Y$};
       \draw(12,0)node[right]{$y_0^{\prime} X$};
       \draw(1,-2)--++(4,0)--++(-1,-1)--++(-4,0)--cycle; 
       \draw(7,-2)--++(4,0)--++(-1,-1)--++(-4,0)--cycle; 
       \draw(3,-2.5)to[out=85,in=105](8,-2.5);
       \draw(2,-2.5)--++(0,1.5);
       \draw[dashed](2,-1.5)--++(0,1);
       \draw(2,-2.5)--++(0.75,0.75);
       \draw[dashed](2.75,-1.75)--++(0.5,0.5);
       \draw(2,-2.5)--++(-0.75,0.75);
       \draw[dashed](1.25,-1.75)--++(-0.5,0.5);
      \draw(2,-2.5)node[below]{$\overline{x_0}$};
      \draw(2,-0.5)node[right]{$e_Y$};
       \draw(1,-0.5)--++(0,1.5);
       \draw[dashed](1,1)--++(0,0.5);
       \draw(1,-0.5)--++(0.75,0.75);
       \draw[dashed](1.75,0.25)--++(0.5,0.5);
       \draw(1,-0.5)--++(-0.75,0.75);
       \draw[dashed](0.25,0.25)--++(-0.5,0.5);
       \draw(0.5,-0.5)node{$\overline{y_0}$};
       \draw(1,1.5)node[left]{$e_X$};
       \draw(1,-2.5)--++(-2,1);
       \draw[dashed](-1,-1.5)--++(-1,0.5);
       \draw(10,-0.5)--++(0,1.5);
       \draw[dashed](10,1)--++(0,0.5);
       \draw(10,-0.5)--++(0.75,0.75);
       \draw[dashed](10.75,0.25)--++(0.5,0.5);
       \draw(10,-0.5)--++(-0.75,0.75);
       \draw[dashed](9.25,0.25)--++(-0.5,0.5);
       \draw(10.5,-0.5)node{$\overline{x_1^{\prime}}$};
       \draw(10,1.5)node[right]{$e_Y$};
       \draw(9,-2.5)--++(0,1.5);
       \draw[dashed](9,-1)--++(0,0.5);
       \draw(9,-2.5)--++(0.75,0.75);
       \draw[dashed](9.75,-1.75)--++(0.5,0.5);
       \draw(9,-2.5)--++(-0.75,0.75);
       \draw[dashed](8.25,-1.75)--++(-0.5,0.5);
       \draw(9.5,-2.5)node{$\overline{y_0^{\prime}}$};
       \draw(9,-0.5)node[left]{$e_X$};
       \draw(-0.5,1.5)--++(0,1.5);
       \draw[dashed](-0.5,3)--++(0,0.5);
       \draw(-0.5,1.5)--++(0.75,0.75);
       \draw[dashed](0.25,2.25)--++(0.5,0.5);
       \draw(-0.5,1.5)--++(-0.75,0.75);
       \draw[dashed](-1.25,2.25)--++(-0.5,0.5);
       \draw(2,1.5)--++(0,1.5);
       \draw[dashed](2,3)--++(0,0.5);
       \draw(2,1.5)--++(0.75,0.75);
       \draw[dashed](2.75,2.25)--++(0.5,0.5);
       \draw(2,1.5)--++(-0.75,0.75);
       \draw[dashed](1.25,2.25)--++(-0.5,0.5);
       \draw(11.5,1.5)--++(0,1.5);
       \draw[dashed](11.5,3)--++(0,0.5);
       \draw(11.5,1.5)--++(0.75,0.75);
       \draw[dashed](12.25,2.25)--++(0.5,0.5);
       \draw(11.5,1.5)--++(-0.75,0.75);
       \draw[dashed](10.75,2.25)--++(-0.5,0.5);
       \draw(9,1.5)--++(0,1.5);
       \draw[dashed](9,3)--++(0,0.5);
       \draw(9,1.5)--++(0.75,0.75);
       \draw[dashed](9.75,2.25)--++(0.5,0.5);
       \draw(9,1.5)--++(-0.75,0.75);
       \draw[dashed](8.25,2.25)--++(-0.5,0.5);
       \draw(3,-2.5)node[right]{$e_X$};
       \draw(8,-2.5)node[left]{$e_Y$};
       \draw(0,-3)node[below]{$\epsilon X$};
       \draw(10,-3)node[below]{$\epsilon Y$};
       \draw(10,-2.5)--++(2,1);
       \draw[dashed](12,-1.5)--++(1,0.5);
       \end{tikzpicture}
       \caption{The free product $X*Y$.
                   Here, $x_0,x_1^{\prime} \in X_0^*$ and $y_0, y_0^{\prime} \in Y_0^*$.}
       \label{fig 1}
       \end{figure}
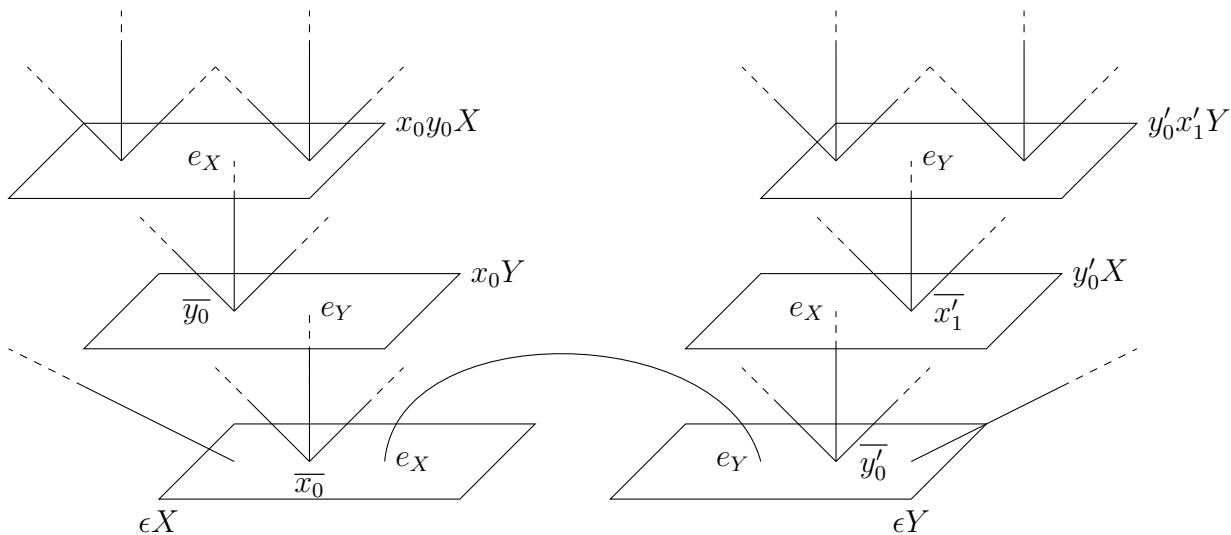
\begin{notation}
\label{nt:coord}
 Each point $p\in X*Y$ can naturally identified with the triplet $(\omega,z,t)$
 where $\omega\in W,\, z\in X \sqcup Y$ and $t\in [0,1)$ satisfying:
 \begin{itemize}
  \item If $\omega \in W_X$, then $z\in X$,
  \item If $\omega \in W_Y$, then $z\in Y$,
  \item If $z\notin i_X(X_0)\sqcup i_Y(Y_0)$, then $t=0$.
 \end{itemize}
 We call $(\omega,z,t)$ the \textit{coordinate} of $p$. 
 We say that $p$ \textit{belongs to sheets} if $t=0$, and
    say that $p$ \textit{belongs to edges} if $t>0$.
 When $p$ belongs to a sheet, we abbreviate $(\omega,z,0)$ as $(\omega,z)$.
\end{notation}

\begin{notation}
 We use the following notations.
 \begin{itemize}
  \item For $z\in X\sqcup Y$, set
        \begin{align*}
         \norm{z} \coloneqq \begin{cases}
                             d_X(e_X,z) & \text{if}\, z\in X,\\
                             d_Y(e_Y,z) & \text{if}\, z\in Y.
                            \end{cases}
        \end{align*}
  \item For $u,v\in X\sqcup Y$, set
        \begin{align*}
         d_{X\sqcup Y}(u,v) \coloneqq 
         \begin{cases}
          d_X(u,v) & \text{if}\, \{u,v\}\subset X,\\
          d_Y(u,v) & \text{if}\, \{u,v\}\subset Y,\\
          \infty & \text{else}.
         \end{cases}
        \end{align*}
 \end{itemize}
\end{notation}

We will construct the metric 
$d_*$ on the free product $X* Y$.
First, we define a symmetric function $D_*\colon W\times W\rightarrow \R$.
Let $\omega=uw_0\cdots w_n\in W$ and 
$\omega^{\prime}=uw_0^{\prime} \cdots w_m^{\prime}\in W$, 
where $u=u_0\dots u_k\in W$ is the maximal common prefix, that is,
we have $w_0 \neq w_0'$. 
We define 
\begin{align*}
 D_*(\omega, \omega^{\prime})\coloneqq 
 &d_{X\sqcup Y}(\overline{w_0},\overline{w_0^{\prime}}) 
 +\sum_{i=1}^n \norm{\overline{w_i}}
 +\sum_{j=1}^m \norm{\overline{w_i'}}
 +n + m +2.
\end{align*}
For $\omega=w_0\cdots w_n\in W$, we define
\begin{align*}
 D_*(\epsilon,\omega)\coloneqq  \sum_{i=1}^n \norm{\overline{w_i}} +n +1.
\end{align*}
Finally we define $D_*(\epsilon,\epsilon) = 0$.

Now we will define the metric $d_*$ on $X*Y$. First, we define $d_*$ on sheets.
Let $p,q\in X*Y$. We suppose that $p$ and $q$ belong to sheets.
Let $(\omega,u,0)$ and $(\tau,v,0)$ be the coordinate of $p$ and $q$, respectively, 
Here, $\omega,\tau \in W$ and $u,v\in X\sqcup Y$, as in Notation~\ref{nt:coord}.
We consider the following three cases.
\begin{enumerate}[label=(\Roman*)]
 \item \label{item:w=t}
       $\omega$ and $\tau$ are equal.

       In this case, first, we suppose that $\omega = \tau \neq \epsilon$.
       Then we define
       \begin{align*}
        d_*(p,q) \coloneqq d_{X\sqcup Y}(u,v).
       \end{align*}
       Next, we suppose that $\omega = \tau = \epsilon$.
       Then we define
       \begin{align*}
        d_*(p,q) \coloneqq \begin{cases}
                            d_{X\sqcup Y}(u,v) & \text{if} \, \{u,v\}\subset X 
                                               \text{ or } \{u,v\}\subset Y, \\
                            \norm{u} + \norm{v} + 1& \text{else} \quad \text{(see Figure \ref{fig 2}).}
                           \end{cases}
       \end{align*}
       \begin{figure}[h]
       \begin{tikzpicture}[scale=1]
       \draw(0,0)--++(4,0)--++(-1,-1)--++(-4,0)--cycle; 
       \draw(7,0)--++(4,0)--++(-1,-1)--++(-4,0)--cycle; 
       \draw(1,-0.5)to[out=45,in=135](8,-0.5);
       \draw(-1,-0.5)node[left]{$\epsilon X$};
       \draw(11,-0.5)node[right]{$\epsilon Y$};
       \draw(1.5,-0.5)node{$e_X$};
       \draw(0,-0.75)node[above]{$u$};
       \draw(7.5,-0.5)node{$e_Y$};
       \draw(9,-0.75)node[above]{$v$};
       \draw(4.5,1.5)node{$\{\epsilon \} \times [0,1]$};
       \fill(9,-0.75)circle(0.06);
       \fill(0,-0.75)circle(0.06);
       \end{tikzpicture}
              \caption{An example of case \ref{item:w=t}. 
                   Let $p=(\epsilon,u)$ and $q=(\epsilon, v)$, where $u \in X$, and $v \in Y$.}
       \label{fig 2}
       \end{figure}
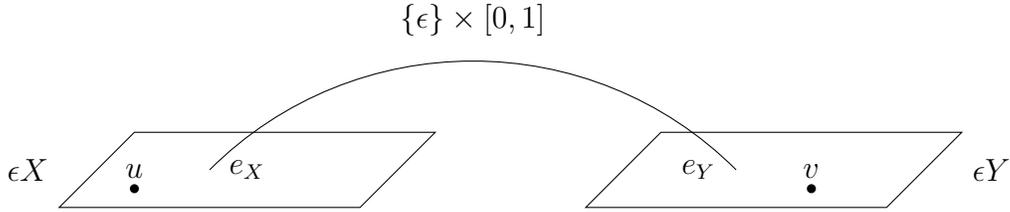
 \item \label{item:w-sub-t} 
$\omega$ is a proper subword of $\tau$.

       In this case, there exists $\tau'\in W\setminus \{\epsilon\}$ such that $\tau=\omega\tau'$. 
       Let $z$ be the initial letter of $\tau'$. First, we assume that $\omega$ is not the empty word.
       See Figure \ref{fig 3}. 
       In this figure, $z=x_0'$.
       Then we define 
       \begin{align*}
        d_*(p,q)\coloneqq d_{X\sqcup Y}(u,\overline{z})
        + D_*(\epsilon,\tau') + \norm{v}.
       \end{align*}

       Next, we assume that  $\omega$ is the empty word. 
       If $\{u,\overline{z}\}\subset X$ or $\{u,\overline{z}\}\subset Y$ holds, 
       then we define
       \begin{align*}
        d_*(p,q)\coloneqq d_{X\sqcup Y}(u,\overline{z})
        + D_*(\epsilon,\tau') + \norm{v}.
       \end{align*}
       Otherwise, we define
       \begin{align*}
        d_*(p,q)\coloneqq \norm{u} + \norm{\overline{z}} +D_*(\epsilon,\tau') + \norm{v} + 1.
       \end{align*}

 \item Neither \ref{item:w=t} nor \ref{item:w-sub-t}. \label{III}

       In this case, there exist the maximal common prefix $\rho\in W$ (possibly the empty word) and
       $\omega',\tau'\in W \setminus \{\epsilon\}$
       such that $\omega=\rho\omega'$ and $\tau = \rho\tau'$. 
       Let $z_0$ and $w_0$ be the initial letter of $\omega'$ and $\tau'$, respectively.
       See Figure \ref{fig 4}.
       In this figure, $z_0=x_0$ and $w_0=x_0'$. 
       Then we define
       \begin{align*}
        d_*(p,q) \coloneqq d_{*}((\rho,\overline{z_0}),(\rho,\overline{w_0})) + D_*(\epsilon, \omega^{\prime}) +D_*(\epsilon,\tau')+ \norm{u} + \norm{v}.
       \end{align*}
       
\end{enumerate}
       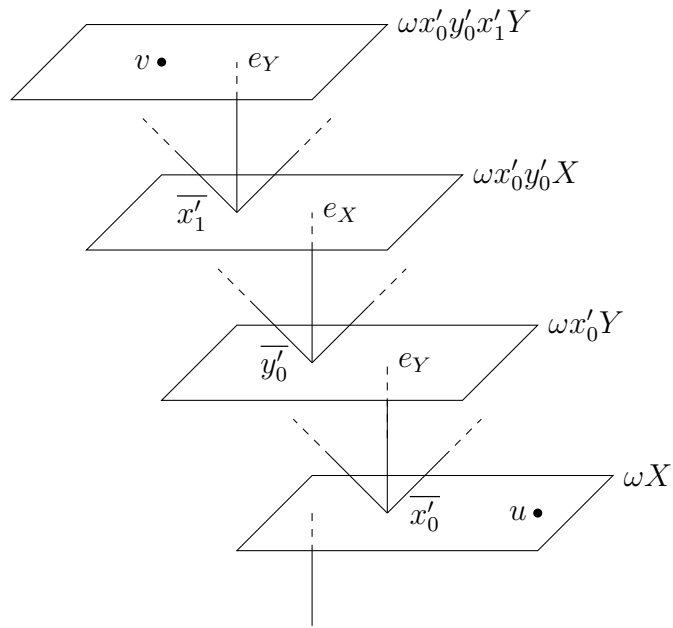
\begin{figure}[h]
       \vspace*{0.5cm}
       \begin{tikzpicture}
       \draw(-2,4)--++(4,0)--++(-1,-1)--++(-4,0)--cycle; 
       \draw(-1,3.5)node[left]{$v$};
       \fill(-1,3.5)circle(0.06);
       \draw(2,4)node[right]{$\omega x_0^{\prime} y_0^{\prime} x_1^{\prime} Y$};
       \draw(-1,2)--++(4,0)--++(-1,-1)--++(-4,0)--cycle; 
       \draw(3,2)node[right]{$\omega x_0^{\prime} y_0^{\prime} X$};
       \draw(0,0)--++(4,0)--++(-1,-1)--++(-4,0)--cycle; 
       \draw(4,0)node[right]{$\omega x_0^{\prime} Y$};
       \draw(1,-2)--++(4,0)--++(-1,-1)--++(-4,0)--cycle; 
       \draw(5,-2)node[right]{$\omega X$};
       \draw[dashed](1,-2.5)--++(0,-0.5);
       \draw(1,-3)--++(0,-1);
       \draw(2,-2.5)--++(0,1.5);
       \draw[dashed](2,-1.5)--++(0,1);
       \draw(2,-2.5)--++(0.75,0.75);
       \draw[dashed](2.75,-1.75)--++(0.5,0.5);
       \draw(2,-2.5)--++(-0.75,0.75);
       \draw[dashed](1.25,-1.75)--++(-0.5,0.5);
      \draw(2.5,-2.5)node{$\overline{x_0^{\prime}}$};
      \draw(2,-0.5)node[right]{$e_Y$};
      \draw(4,-2.5)node[left]{$u$};
      \fill(4,-2.5)circle(0.06);
       \draw(1,-0.5)--++(0,1.5);
       \draw[dashed](1,1)--++(0,0.5);
       \draw(1,-0.5)--++(0.75,0.75);
       \draw[dashed](1.75,0.25)--++(0.5,0.5);
       \draw(1,-0.5)--++(-0.75,0.75);
       \draw[dashed](0.25,0.25)--++(-0.5,0.5);
       \draw(0.5,-0.5)node{$\overline{y_0^{\prime}}$};
       \draw(1,1.5)node[right]{$e_X$};
       \draw(0,1.5)--++(0,1.5);
       \draw[dashed](0,3)--++(0,0.5);
       \draw(0,1.5)--++(0.75,0.75);
       \draw[dashed](0.75,2.25)--++(0.5,0.5);
       \draw(0,1.5)--++(-0.75,0.75);
       \draw[dashed](-0.75,2.25)--++(-0.5,0.5);
       \draw(0,3.5)node[right]{$e_Y$};
       \draw(-0.25,1.5)node[left]{$\overline{x_1^{\prime}}$};
       \end{tikzpicture}
              \caption{An example of case \ref{item:w-sub-t}. 
       Let $p=(\omega,u)$ and $q=(\omega  x_0^{\prime}y_0^{\prime}x_1^{\prime},v)$, where $\omega \in W_X \setminus \{\epsilon\}$, $x_i^{\prime} \in X_0^*$, $y_0^{\prime} \in Y_0^*$, $u \in X$, and $v \in Y$.}
       \label{fig 3}
       \end{figure}
       \begin{figure}[h]
       \begin{tikzpicture}[scale=1]
       \draw(0,0)--++(4,0)--++(-1,-1)--++(-4,0)--cycle; 
       \draw(8,0)--++(4,0)--++(-1,-1)--++(-4,0)--cycle; 
       \draw(4,0)node[right]{$\rho x_0 Y$};
       \draw(12,0)node[right]{$\rho x^{\prime}_0 Y$};
       \draw(4,-2)--++(4,0)--++(-1,-1)--++(-4,0)--cycle; 
       \draw(5,-2.5)--++(-1.75,1.75);
       \draw[dashed](3.25,-0.75)--++(-0.25,0.25);
       \draw(3,-0.5)node[left]{$e_Y$};
       \draw(6,-2.5)--++(1.5,1.5);
       \draw[dashed](7.5,-1)--++(0.25,0.25);
       \draw(7.75,-0.75)node[right]{$e_Y$};
       \draw[dashed](5.5,-2.5)--++(0,-0.5);
       \draw(5.5,-3)--++(0,-1);
       \draw(4.8,-2.5)node[left]{$\overline{x_0}$};
       \fill(5,-2.5)circle(0.06);
       \draw(6.5,-2.5)node{$\overline{x^{\prime}_0}$};
       \fill(6,-2.5)circle(0.06);
       \draw(8,-2)node[right]{$\rho X$};
       \draw(1,-0.5)--++(0,1.5);
       \draw[dashed](1,1)--++(0,0.5);
       \draw(1,-0.5)--++(0.75,0.75);%
       \draw[dashed](1.75,0.25)--++(0.5,0.5);
       \draw(1,-0.5)--++(-0.75,0.75);
       \draw[dashed](0.25,0.25)--++(-0.5,0.5);
       \draw(0.5,-0.5)node{$\overline{y_0}$};
       \draw(1,1.5)node[left]{$e_X$};
       \draw(2,1.5)node[above]{$u$};
       \fill(2,1.5)circle(0.06);
       \draw(10,-0.5)--++(0,1.5);
       \draw[dashed](10,1)--++(0,0.5);
       \draw(10,-0.5)--++(0.75,0.75);
       \draw[dashed](10.75,0.25)--++(0.5,0.5);
       \draw(10,-0.5)--++(-0.75,0.75);
       \draw[dashed](9.25,0.25)--++(-0.5,0.5);
       \draw(10.5,-0.5)node{$\overline{y^{\prime}_0}$};
       \draw(10,1.5)node[right]{$e_X$};
       \draw(8,2)--++(4,0)--++(-1,-1)--++(-4,0)--cycle; 
       \draw(12,2)node[right]{$\rho x^{\prime}_0 y^{\prime}_0 X$};
       \draw(8.5,1.5)--++(0,1.5);
       \draw[dashed](8.5,3)--++(0,0.5);
       \draw(8.5,1.5)--++(0.75,0.75);
       \draw[dashed](9.25,2.25)--++(0.5,0.5);
       \draw(8.5,1.5)--++(-0.75,0.75);
       \draw[dashed](7.75,2.25)--++(-0.5,0.5);
       \draw(8,1.5)node{$\overline{x^{\prime}_1}$};
       \draw(8.5,3.5)node[right]{$e_Y$};
       \draw(0,2)--++(4,0)--++(-1,-1)--++(-4,0)--cycle; 
       \draw(4,2)node[right]{$\rho x_0y_0 X$};
       \draw(8,4)--++(4,0)--++(-1,-1)--++(-4,0)--cycle; 
       \draw(12,4)node[right]{$\rho x^{\prime}_0 y^{\prime}_0 x^{\prime}_1 Y$};
       \draw(10,3.5)node[above]{$v$};
       \fill(10,3.5)circle(0.06);
       \end{tikzpicture}
              \caption{An example of \ref{III}.
       Let $p=(\rho x_0y_0, u)$ and $q=(\rho x^{\prime}_0y^{\prime}_0x^{\prime}_1, v)$,
       where $x_i,x_i^{\prime} \in X_0^*$, $y_0, y_0^{\prime} \in Y_0^*$, $u \in X$, and $v \in Y$.
       Here, $\rho \in W_X$ and $x_0 \neq x_0^{\prime}$.
       }
       \label{fig 4}
       \end{figure}
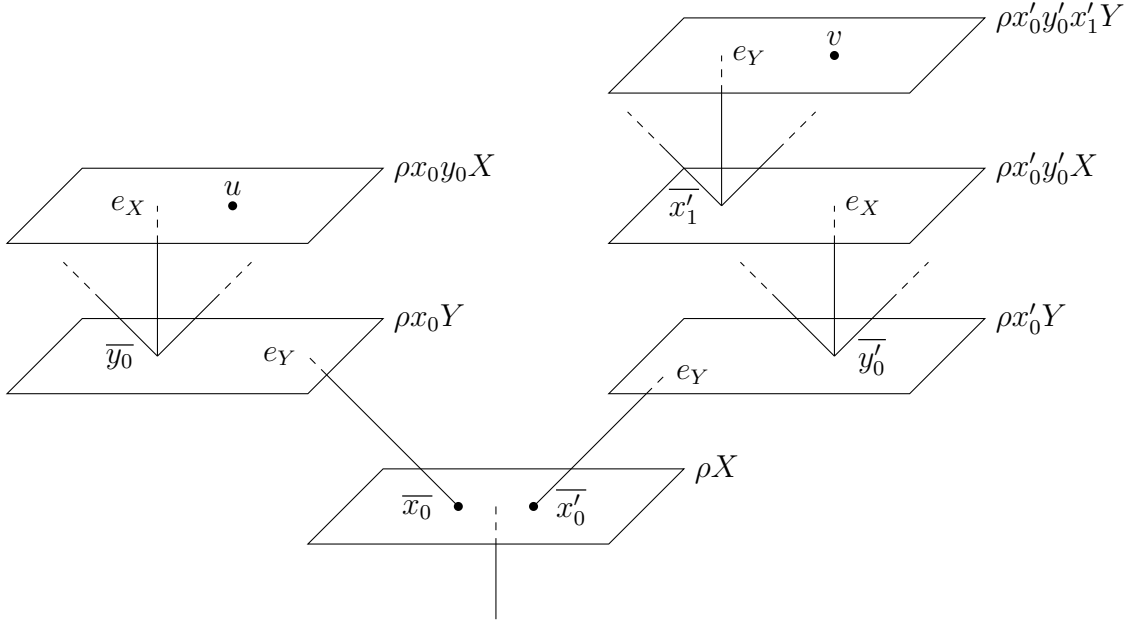
Finally, we extend $d_*$ on edges in an obvious way. 
From the construction, it is clear that $d_*$ is symmetric, 
non-degenerate, and satisfies the triangle inequality. 
This completes the construction of the metric $d_*$ on the free product $X*Y$.

\begin{example}
Let $G$ and $H$ be finitely generated groups
and let $S_G$ and $S_H$ be finite generating sets of $G$ and $H$, respectively.
Let $\Cay(G, S_G)$ and $\Cay(H, S_H)$ be Cayley graphs of $G$ and $H$.
We can construct the free product $\Cay(G, S_G)$ and $\Cay(H, S_H)$ with respect to $(G, \iota_G, e_G)$ and $(H, \iota_H, e_H)$, where $\iota_G$ and $\iota_H$ are the inclusion maps.
The resulting space $\Cay(G, S_G) * \Cay(H, S_H)$
coincides with the free product of Cayley graphs in the sense of Pisanski and Tucker \cite{PT}. 
\end{example}

The following example shows that the coarse geometry of a free product
depends on the choice of nets.
\begin{example}
\label{eg:finite_net}
For $m \in \mathbb{N}$, set $G_m=\mathbb{Z}/m \mathbb{Z}$. 
We assume that $G_m$ acts on $\mathbb{R}^2$ by rotations, that is,
\[
[l] \cdot (r\cos \theta, r \sin \theta)= \left(r \cos \left(\theta + \frac{2\pi l}{m} \right),  r \sin \left( \theta + \frac{2 \pi l}{m} \right) \right).
\]
Let $o((1,0))$ be an orbit map of $(1,0)$ by this action. 
The triplet $(G_m, o((1,0)), (1,0))$ is a net of $\mathbb{R}^2$. 
For $m,n \in \mathbb{N}$, we consider the free product $\mathbb{R}^2 * \mathbb{R}^2$ 
with respect to $(G_m, o((1,0)), (1,0))$ and $(G_n, o((1,0)), (1,0))$.
We will see how the growth type of $\mathbb{R}^2 * \mathbb{R}^2$ depends on
the choice of nets.

We denote by $V(R)$ the number of the points of the form $(\omega,(p,q))$ 
in the closed ball of radius $R$ centered at $(\epsilon,(0,0))$, where
$\omega$ is a word and $p,q$ are integers.

Set $G_m^*=G_m\setminus\{0\}$ and let $W$ be the set of normal words on 
$G_n^*\sqcup G_m^*$, and the empty word $\epsilon$.
\begin{enumerate}
\item 
Suppose $m=1$. In this case, $G_m^*$ is empty, so normal words consist only of
single letters, namely, $W=G_n^* \sqcup \{\epsilon\}$.
Then, the number of sheets is $n+1$. 
We illustrate the shape of $\mathbb{R}^2*\mathbb{R}^2$ in Figure \ref{spike}.
We can estimate $V(R)$ as follows:
\[
V(R) \leq (n+1) \cdot 2\pi R^2.
\]
Therefore, in this case, $\mathbb{R}^2*\mathbb{R}^2$ has a polynomial growth. 
       \begin{figure}[h]
       \begin{tikzpicture}[scale=1]
       \draw(-1.5,0)--++(4,0)--++(-1,-1)--++(-4,0)--cycle; 
       \draw(8,0)--++(4,0)--++(-1,-1)--++(-4,0)--cycle; 
       \draw(2,-2)--++(6,0)--++(-1,-1)--++(-6,0)--cycle; 
       \draw(2.9,-2.4)--++(-1.4,1.4);
       \draw[dashed](1.5,-1)--++(-0.25,0.25);
       \draw[dashed] (4.5,-2.5) circle [x radius=0.2,y radius=2,rotate=90];
       \draw(6.15,-2.35)--(7.5,-1);
       \draw[dashed](7.5,-1)--++(0.25,0.25);
       \draw(5.5,-2.3)--(7,2); 
       \draw[dashed](7,2)--(7.3,2.7);
       \draw(6,3)--++(4,0)--++(-1,-1)--++(-4,0)--cycle; 
       \draw(3.5,-2.3)--(1.8,2); 
       \draw[dashed](1.8,2)--(1.5,2.7);
       \draw(0,3)--++(4,0)--++(-1,-1)--++(-4,0)--cycle; 
       \fill(5,1)circle(0.06);
       \fill(4.5,1)circle(0.06);
       \fill(4,1)circle(0.06);
       \end{tikzpicture}
       \caption{$m=1.$}
       \label{spike}
       \end{figure}
\item
Suppose $m=2$ and $n=2$. 
In this case the set of letters $G_2^*\sqcup G_2^* = \{1\}\sqcup \{1\}$
contains two elements.
We illustrate the shape of $\mathbb{R}^2*\mathbb{R}^2$ in Figure \ref{gline}.
We can estimate $V(R)$:
\[
V(R) \leq R \cdot 2 \pi R^2. 
\]
Therefore, in this case, $\mathbb{R}^2*\mathbb{R}^2$ has a polynomial growth. 
 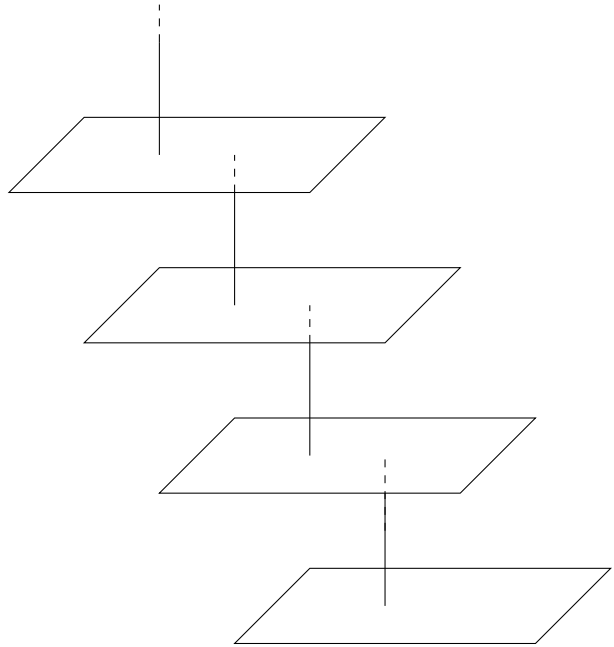
\begin{figure}[h]
       \vspace*{0.5cm}
       \begin{tikzpicture}
       \draw(-1,3.5)--++(0,1.5);
       \draw[dashed](-1,5)--++(0,0.5); 
       \draw(-2,4)--++(4,0)--++(-1,-1)--++(-4,0)--cycle; 
       \draw(-1,2)--++(4,0)--++(-1,-1)--++(-4,0)--cycle; 
       \draw(0,0)--++(4,0)--++(-1,-1)--++(-4,0)--cycle; 
       \draw(1,-2)--++(4,0)--++(-1,-1)--++(-4,0)--cycle; 
       \draw(2,-2.5)--++(0,1.5);
       \draw[dashed](2,-1.5)--++(0,1);
       \draw(1,-0.5)--++(0,1.5);
       \draw[dashed](1,1)--++(0,0.5);
       \draw(0,1.5)--++(0,1.5);
       \draw[dashed](0,3)--++(0,0.5);
       \end{tikzpicture}
       \caption{$m=2$ and $n=2$.}
       \label{gline}
       \end{figure}
\item
Suppose $m \geq 3$ and $n \geq 2$. In this case
$G_m^*$ has at least two elements. 
Then the free product has a tree-like structure. 
Since the growth of the free product of the groups
$G_m * G_n$ is exponential, the number of sheets grows exponentially.
Therefore, in this case, $\mathbb{R}^2*\mathbb{R}^2$ has a exponential growth. 
       \end{enumerate}

\end{example}

\begin{remark}\label{wp}
Let $X$, $X^{\prime}$, and $Y$ be metric spaces.
We suppose that $X$ and $X^{\prime}$ are quasi-isometric.
In general, $X*Y$ and $X^{\prime}*Y$ are not necessarily quasi-isometric.
For example, let $\Gamma_2$ be a Cayley graph of $\mathbb{Z}/2\mathbb{Z}$ for some generating set,
and $\Gamma_3$ be that of $\mathbb{Z}/3\mathbb{Z}$. It is clear that 
$\Gamma_2$ is quasi-isometric to $\Gamma_3$.
By Proposition~\ref{prop:GHactsonXY}, 
$\mathbb{Z}/2\mathbb{Z} * \mathbb{Z}/2\mathbb{Z}$ acts 
properly and cocompactly on $\Gamma_2 * \Gamma_2$, and so does
$\mathbb{Z}/3\mathbb{Z} * \mathbb{Z}/2\mathbb{Z}$ on $\Gamma_3*\Gamma_2$.
Since $\mathbb{Z}/2\mathbb{Z} * \mathbb{Z}/2\mathbb{Z}$ has two ends and
$\mathbb{Z}/3\mathbb{Z} * \mathbb{Z}/2\mathbb{Z}$ has infinitely many ends,
$\Gamma_2*\Gamma_2$ and $\Gamma_3*\Gamma_2$ are not quasi-isometric.
\end{remark}

\section{Coarsely convex spaces}
In this section, we briefly review coarse geometry and coarsely convex spaces.

\subsection{Geodesic metric spaces and proper metric spaces}
\par A metric space $(X,d_X)$ is a \textit{geodesic metric space}
if for any $x,x^{\prime} \in X$, there exists a map $\gamma : [0,a] \to X$ such that 
$\gamma(0)=x$, $\gamma(a)=x^{\prime}$, and $d_X(\gamma(t),\gamma(t^{\prime}))=|t-t^{\prime}|$ for all $t,t^{\prime} \in [0,a]$.
\par We say that a metric space $(X,d_X)$ is a \textit{proper metric space} if every bounded closed subset in $X$ is compact.
\subsection{Coarse equivalence and quasi-isometry}
Let $(X,d_X)$ and $(Y,d_Y)$ be metric spaces.
We say that a map $f : X \to Y$ is a \textit{coarse map} 
if there exist a non-decreasing function $\rho_+ : [0, \infty) \to [0, \infty)$ 
such that the inequality 
\begin{equation*}
 d_Y(f(x),f(x^{\prime})) \leq \rho_+(d_X(x,x^{\prime})) 
\end{equation*}
holds for any $x,x^{\prime} \in X$, and for any bounded subset $B\subset Y$, the 
preimage $f^{-1}(B)$ is bounded.

We also say that a map $f : X \to Y$ is a \textit{coarse embedding} 
if there exist non-decreasing functions $\rho_-, \rho_+ : [0, \infty) \to [0, \infty)$ 
such that 
\begin{equation*}
\lim_{t \to \infty} \rho_-(t)= \infty,
\end{equation*}
and the inequality 
\begin{equation*}
\rho_-(d_X(x,x^{\prime})) \leq d_Y(f(x),f(x^{\prime})) \leq \rho_+(d_X(x,x^{\prime})) 
\end{equation*}
holds for any $x,x^{\prime} \in X$.
When we can choose $\rho_-$ and $\rho_+$ to be affine maps, we say that the map $f$ is a \textit{quasi-isometric embedding}.
\par Let $X^{\prime} \subset X$. 
For $M\geq 0$, we say that $X^{\prime}$ is \textit{$M$-dense} in $X$ if $X=B_M(X^{\prime})$, where $B_M(X^{\prime})$ is the closed $M$-neighborhood of $X^{\prime}$.
We say that $X$ and $Y$ are \textit{coarsely equivalent} if there exist a coarse embedding map $f : X \to Y$ and $M\geq0$ 
such that $f(X)$ is $M$-dense in $Y$.
We say that $X$ and $Y$ are \textit{quasi-isometric} if there exist a quasi-isometric embedding map $f : X \to Y$ and $M\geq0$ 
such that $f(X)$ is $M$-dense in $Y$.
\par Let $\lambda \geq 1$ and $k\geq 0$.
A \textit{$(\lambda, k)$-quasi-geodesic segment} is a $(\lambda,k)$-quasi-isometric embedding $\gamma : [0,a] \to X$, 
that is, the inequality
\begin{equation*}
\lambda^{-1}| t-t^{\prime} | -k \leq d_X(\gamma(t),\gamma(t^{\prime})) \leq \lambda| t-t^{\prime} |+k
\end{equation*}
holds for all $t,t^{\prime} \in [0,a]$.

\subsection{Coarsely convex spaces}
\begin{definition}
Let $(X,d_X)$ be a metric space.
Let $\lambda \geq 1, k \geq 0, E \geq 1$, and $C \geq 0$ be constants.
Let $\theta : \mathbb{R}_{\geq 0} \to \mathbb{R}_{\geq 0}$ be a non-decreasing function.
Let $\mathcal{L}$ be a family of $(\lambda,k)$-quasi-geodesic segments.
The metric space $X$ is \textit{$(\lambda, k, E, C, \theta, \mathcal{L})$-coarsely convex}, if $\mathcal{L}$ satisfies the following:
\begin{enumerate}
\item[(CC1)] For any $v,w \in X$, there exists a quasi-geodesic segment $\gamma \in \mathcal{L}$ with $\gamma : [0,a] \to X$, $\gamma(0)=v$ and $\gamma(a)=w$.
\item[(CC2)] Let $\gamma, \eta \in \mathcal{L}$ be quasi-geodesic segments with $\gamma : [0,a] \to X$ and $\eta : [0,b] \to X$.
                 Then for all $t \in [0,a], s \in [0,b]$, and $c \in [0,1]$, we have that 
                \[ d_X(\gamma(ct), \eta(cs)) \leq (1-c)Ed_X(\gamma(0),\eta(0)) + cEd_X(\gamma(t), \eta(s))+C.\]
                We call this inequality \textit{the coarsely convex inequality}.
\item[(CC3)]Let $\gamma, \eta \in \mathcal{L}$ be quasi-geodesic segments with $\gamma : [0,a] \to X$ and $\eta : [0,b] \to X$.
                  Then for all $t \in [0,a]$ and $s \in [0,b]$, we have that 
                 \[ |t-s| \leq \theta( d_X(\gamma(0), \eta(0)) + d_X(\gamma(t), \eta(s))).\]
\end{enumerate}
The family $\mathcal{L}$ satisfying (CC1), (CC2), and (CC3) is called a \textit{system of good quasi-geodesic segments}, 
and elements $\gamma \in \mathcal{L}$ are called \textit{good quasi-geodesic segments}.
\end{definition}

We say that a metric space $X$ is \textit{a coarsely convex space}
if there exist constants $\lambda,k,E,C,$ a non-decreasing function  $\theta : \mathbb{R}_{\geq 0} \to \mathbb{R}_{\geq 0}$,
and a family of $(\lambda,k)$-quasi-geodesic segments $\mathcal{L}$ such that $X$ is $(\lambda, k, E, C, \theta, \mathcal{L})$-coarsely convex.
\par If the family $\mathcal{L}$ consists of only geodesic segments, then $\mathcal{L}$ always satisfies (CC3) by the triangle inequality.
\begin{lemma}\label{geodesic}
Let $(X,d_X)$ be a metric space.
Let $\mathcal{L}$ be a family of geodesic segments.
Then $\mathcal{L}$ satisfies $\mathrm{(CC3)}$.
In particular, we can take a non-decreasing function satisfying $\mathrm{(CC3)}$ to the identity map $\mathrm{id}_{\mathbb{R}_{\geq 0}}$. 
\end{lemma}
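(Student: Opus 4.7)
The plan is to verify (CC3) directly with $\theta = \mathrm{id}_{\mathbb{R}_{\geq 0}}$ by exploiting the fact that geodesic parametrizations are isometries onto their images, so that the parameter values $t$ and $s$ coincide with the distances $d_X(\gamma(0),\gamma(t))$ and $d_X(\eta(0),\eta(s))$.

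Concretely, I would fix geodesic segments $\gamma\colon [0,a]\to X$ and $\eta\colon [0,b]\to X$ in $\mathcal{L}$ together with $t\in [0,a]$ and $s\in [0,b]$, and then apply the triangle inequality twice along the four-point configuration $\gamma(0),\eta(0),\eta(s),\gamma(t)$. The first application gives
\begin{equation*}
 t = d_X(\gamma(0),\gamma(t)) \leq d_X(\gamma(0),\eta(0)) + d_X(\eta(0),\eta(s)) + d_X(\eta(s),\gamma(t)) = s + d_X(\gamma(0),\eta(0)) + d_X(\gamma(t),\eta(s)),
\end{equation*}
so that $t - s \leq d_X(\gamma(0),\eta(0)) + d_X(\gamma(t),\eta(s))$. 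Swapping the roles of $\gamma$ and $\eta$ yields the symmetric inequality $s - t \leq d_X(\gamma(0),\eta(0)) + d_X(\gamma(t),\eta(s))$, and combining the two gives $|t-s| \leq d_X(\gamma(0),\eta(0)) + d_X(\gamma(t),\eta(s))$, which is exactly (CC3) with $\theta = \mathrm{id}_{\mathbb{R}_{\geq 0}}$.

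There is really no obstacle here; the statement is essentially an unpacking of the definition of a geodesic segment together with two invocations of the triangle inequality. The only care needed is to make sure one uses the isometric parametrization property $d_X(\gamma(t),\gamma(t'))=|t-t'|$ (and likewise for $\eta$) rather than any weaker quasi-geodesic estimate, since for genuine quasi-geodesics one would incur multiplicative and additive constants and would need a nontrivial $\theta$.
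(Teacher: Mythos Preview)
Your proof is correct and follows essentially the same approach as the paper: both use the isometric parametrization $t = d_X(\gamma(0),\gamma(t))$, $s = d_X(\eta(0),\eta(s))$ together with the triangle inequality along the path $\gamma(0),\eta(0),\eta(s),\gamma(t)$. The only cosmetic difference is that the paper assumes without loss of generality that $t>s$ and handles a single case, whereas you write out both inequalities and combine them.
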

\begin{proof}
Let $(X,d_X)$ be a metric space
and let $\mathcal{L}$ be a family of geodesic segments.
Let $\gamma, \eta \in \mathcal{L}$ be geodesic segments with $\gamma : [0,a] \to X$ and $\eta : [0,b] \to X$.
Set $t \in [0,a]$ and $s \in [0,b]$.
We suppose that $t>s$.
Since $\gamma$ and $\eta$ are geodesic segments, we have
\begin{equation*}
|t-s|=d_X(\gamma(0),\gamma(t))-d_X(\eta(0),\eta(s)).
\end{equation*}
By the triangle inequality, we have
\begin{align*}
&d_X(\gamma(0),\gamma(t))-d_X(\eta(0),\eta(s)) \\
&\leq d_X(\gamma(0),\eta(0)) + d_X(\eta(0),\eta(s)) + d_X(\eta(s),\gamma(t)) - d_X(\eta(0),\eta(s))  \\
&=d_X(\gamma(0),\eta(0)) + d_X(\gamma(t),\eta(s)).
\end{align*}
Therefore the family $\mathcal{L}$ satisfies (CC3).
\end{proof}
Let $X$ be a metric space.
For a map $\gamma : [a,b] \to X$, we denote by $\gamma^{-1}$, the map $\gamma^{-1}:[a,b] \to X$ defined by $\gamma^{-1}(t)\coloneqq\gamma(b-(t-a))$ for $ t \in [a,b]$.
For $c \in [a,b]$, we denote by $\gamma|_{[a,c]}$ the restriction of $\gamma$ to $[a,c]$.
Let $\mathcal{L}$ be a family of quasi-geodesic segments in $X$.
The family $\mathcal{L}$ is \textit{symmetric} if $\gamma^{-1} \in \mathcal{L}$ for all $\gamma \in \mathcal{L}$,
and $\mathcal{L}$ is \textit{prefix-closed} if $\gamma|_{[a,c]} \in \mathcal{L}$ for all $\gamma \in \mathcal{L}$ with $\gamma: [a,b] \to X$ and for all $c \in [a,b]$.
\par
The following Proposition \ref{sgCC} plays an important role in the proof of the main result. 
\begin{proposition}\label{sgCC}
Let $(X,d_X)$ be a metric space.
Let $E \geq 1$ and $C \geq 0$ be constants.
Let $\mathcal{L}$ be a family of geodesic segments.
Suppose that $\mathcal{L}$ is symmetric and
$\mathcal{L}$ satisfies the following $\mathrm{(sgCC1)}$ and $\mathrm{(sgCC2)}$, then
$X$ is $(1, 0, E, 3C, \mathrm{id}_{\mathbb{R}_{\geq 0}}, \mathcal{L})$-coarsely convex.
\begin{itemize}
\item[$\mathrm{(sgCC1)}$] For any $v,w \in X$, there exists a geodesic segment $\gamma \in \mathcal{L}$ with $\gamma : [0,a] \to X$, $\gamma(0)=v$ and $\gamma(a)=w$.
\item[$\mathrm{(sgCC2)}$] Let $\gamma, \eta \in \mathcal{L}$ be geodesic segments with $\gamma : [0,a] \to X$ and $\eta : [0,b] \to X$.
Let $\gamma(0)=\eta(0)$.
Then for all $t \in [0,a], s \in [0,b]$, and $c \in [0,1]$, we have that 
\begin{equation*}
d_X(\gamma(ct), \eta(cs)) \leq cEd_X(\gamma(t), \eta(s))+C.
\end{equation*}
\end{itemize}
\end{proposition}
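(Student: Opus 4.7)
To prove that $X$ is $(1, 0, E, 3C, \mathrm{id}_{\mathbb{R}_{\geq 0}}, \mathcal{L})$-coarsely convex, the plan is to check conditions (CC1), (CC2), and (CC3). Since every geodesic segment is a $(1,0)$-quasi-geodesic segment, the quasi-geodesic requirement is automatic. Condition (CC1) is precisely (sgCC1), and (CC3) with $\theta = \mathrm{id}_{\mathbb{R}_{\geq 0}}$ follows directly from Lemma~\ref{geodesic} because $\mathcal{L}$ consists of geodesic segments. The entire substance lies in upgrading the rooted inequality (sgCC2), which only constrains pairs of geodesics sharing a common basepoint, to the full coarsely convex inequality (CC2) in which the two geodesics $\gamma$ and $\eta$ may begin at different points $u = \gamma(0)$ and $v = \eta(0)$.

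Fix $\gamma, \eta \in \mathcal{L}$ with $\gamma : [0,a] \to X$ and $\eta : [0,b] \to X$, together with $t \in [0,a]$, $s \in [0,b]$, and $c \in [0,1]$. The strategy is to insert two intermediate points between $\gamma(ct)$ and $\eta(cs)$, each produced by one application of (sgCC2), and then stitch together the three resulting segments with the triangle inequality. Using (sgCC1), I take $\beta \in \mathcal{L}$ from $u$ to $\eta(s)$, parametrized as $\beta : [0, L_\beta] \to X$ with $L_\beta = d_X(u, \eta(s))$, and $\eta' \in \mathcal{L}$ from $v$ to $\eta(s)$, parametrized as $\eta' : [0, s] \to X$. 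By the symmetry hypothesis, both $\beta^{-1}$ and $\eta'^{-1}$ lie in $\mathcal{L}$ and both start at the common point $\eta(s)$.

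Three applications of (sgCC2) now give the required chain. First, applied to $\gamma$ and $\beta$ rooted at $u$ with scaling factor $c$ and parameters $t$ and $L_\beta$, and using $\beta(L_\beta) = \eta(s)$, it yields
\begin{equation*}
 d_X(\gamma(ct), \beta(cL_\beta)) \leq cE\, d_X(\gamma(t), \eta(s)) + C.
\end{equation*}
Second, applied to $\beta^{-1}$ and $\eta'^{-1}$ rooted at $\eta(s)$ with the legitimate scaling factor $1-c \in [0,1]$ and parameters $L_\beta$ and $s$, the reversal identities $\beta^{-1}((1-c)L_\beta) = \beta(cL_\beta)$ and $\eta'^{-1}((1-c)s) = \eta'(cs)$, together with $\beta^{-1}(L_\beta) = u$ and $\eta'^{-1}(s) = v$, produce
\begin{equation*}
 d_X(\beta(cL_\beta), \eta'(cs)) \leq (1-c)E\, d_X(u, v) + C.
\end{equation*}
Third, applied to $\eta$ and $\eta'$ rooted at $v$ with factor $c$ at parameters $s$ and $s$, the fact that $\eta(s) = \eta'(s)$ yields the smoothing bound $d_X(\eta(cs), \eta'(cs)) \leq C$. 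The triangle inequality along $\gamma(ct) \to \beta(cL_\beta) \to \eta'(cs) \to \eta(cs)$ then assembles these three estimates into exactly the desired (CC2) bound with additive constant $3C$.

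The only real obstacle is the middle step: the natural rooted-at-$v$ partner for $\eta$ in the reversed-parameter application would be $\eta$ itself, but $\eta$ has the wrong endpoint $\eta(b)$, whereas the second application needs a geodesic of length exactly $s$ ending at $\eta(s)$ so that its reversal begins at $\eta(s)$ and ends at $v$. Introducing the auxiliary $\eta'$ repairs this mismatch, and the third application of (sgCC2) is precisely the cheap smoothing step that absorbs the discrepancy $d_X(\eta(cs), \eta'(cs))$ into a single extra additive $C$, producing the announced constant $3C$.
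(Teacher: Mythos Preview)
Your proof is correct and follows essentially the same approach as the paper: both introduce the same two auxiliary geodesics (your $\beta$ and $\eta'$ are the paper's $\omega$ and $\xi$), apply (sgCC2) three times in the same configuration (once at $\gamma(0)$, once at $\eta(s)$ via the reversed geodesics, and once at $\eta(0)$ for the smoothing step), and assemble the result by the triangle inequality to obtain the additive constant $3C$.
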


\begin{proof}
An assumption (sgCC1) implies that (CC1) holds. 
By Lemma \ref{geodesic}, a family of geodesic segments $\mathcal{L}$ satisfies (CC3). 
\par
We will show that $\mathcal{L}$ satisfies (CC2). 
Let $\gamma, \eta \in \mathcal{L}$ be geodesic segments with $\gamma:[0,a] \to X$ and $\eta:[0,b] \to X$.
Let $t \in [0,a], s \in [0,b]$, and $c \in [0,1]$.
By (sgCC1), there exist $\omega \in \mathcal{L}$ with  $\omega :[0,u] \to X$ such that $\omega(0)=\gamma(0)$ and $\omega(u)=\eta(s)$, 
and $\xi \in \mathcal{L}$ with $\xi :[0,v] \to X$ such that $\xi(0)=\eta(0)$ and $\xi(v)=\eta(s)$. 
Since the family of geodesic segments $\mathcal{L}$ is symmetric, $\omega^{-1}$ and $\xi^{-1}$ are in $\mathcal{L}$.
Note that $\gamma(0)=\omega(0)$ and $\omega^{-1}(0)=\xi^{-1}(0)$. 
Then, by \textrm{(sgCC2)}, we have 
\begin{align}
&d_X(\gamma(ct), \xi(cv)) \notag \\
&\leq d_X(\gamma(ct), \omega(cu)) + d_X(\omega(cu), \xi(cv)) \notag \\
&\leq cEd_X(\gamma(t),\omega(u))+C + d_X(\omega^{-1}((1-c)u), \xi^{-1}((1-c)v)) \notag \\
&\leq cEd_X(\gamma(t),\omega(u))+C + (1-c)Ed_X(\omega^{-1}(u),\xi^{-1}(v)) + C \notag \\
&= cEd_X(\gamma(t),\eta(s))+ (1-c)Ed_X(\gamma(0),\eta(0)) + 2C, \label{kirino1}
\end{align}
and we have
\begin{align}
&d_X(\xi(cv),\eta(cs)) \notag \\
&\leq cEd_X(\xi(v),\eta(s)) + C \notag \\
&=cEd_X(\eta(s),\eta(s)) + C \notag \\
&=C. \label{kirino2}
\end{align}
Combinig \eqref{kirino1} and \eqref{kirino2} yields
\begin{align*}
d_X(\gamma(ct),\eta(cs)) 
&\leq d_X(\gamma(ct),\xi(cv)) + d_X(\xi(cv),\eta(cs)) \\
&\leq (1-c)Ed_X(\gamma(0),\eta(0)) +cEd_X(\gamma(t),\eta(s)) +  3C.
\end{align*}
Therefore, $\mathcal{L}$ satisfies (CC2). 
This completes the proof.
\end{proof}

We say that $X$ is a \textit{symmetric geodesic coarsely convex space} 
if there exist constants $E$, $C$, and a family of geodesic segments $\mathcal{L}$ such that $\mathcal{L}$ is symmetric and satisfies (sgCC1) and (sgCC2).

\section{Main results}
In this section, we consider the free products of symmetric geodesic coarsely convex spaces.

\begin{theorem}\label{Ma}
Let $X$ and $Y$ be metric spaces with nets.
If $X$ and $Y$ are symmetric geodesic coarsely convex spaces,
then the free product $X*Y$ is a symmetric geodesic coarsely convex space.
\end{theorem}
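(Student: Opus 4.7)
The plan is to apply Proposition~\ref{sgCC}. I construct a symmetric family $\mathcal{L}_*$ of geodesic segments in $X*Y$ and verify (sgCC1) and (sgCC2); then Proposition~\ref{sgCC} immediately gives the theorem. Let $\mathcal{L}_X$ and $\mathcal{L}_Y$ be symmetric families of good geodesic segments witnessing that $X$ and $Y$ are symmetric geodesic coarsely convex, with constants $(E_X, C_X)$ and $(E_Y, C_Y)$.

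The construction of $\mathcal{L}_*$ mirrors the three-case definition of $d_*$. Given $p,q\in X*Y$ in sheets, I build a path from $p$ to $q$ by concatenating (i) an $\mathcal{L}_X$ or $\mathcal{L}_Y$ geodesic in the initial sheet from $p$ to the first branching point, (ii) the unit edge, (iii) a geodesic across the next sheet from $e_\bullet$ to the next branching point, and so on, ending with a geodesic in the final sheet to $q$; for points on edges I truncate appropriately. Let $\mathcal{L}_*$ be the collection of all such concatenations together with their reversals, parameterized by arc length. That each such path realises $d_*(p,q)$ is immediate from the piecewise definition of $d_*$, giving (sgCC1); the symmetry of $\mathcal{L}_*$ is built in and inherits from the symmetry of $\mathcal{L}_X$ and $\mathcal{L}_Y$.

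For (sgCC2), let $\gamma\colon [0,a]\to X*Y$ and $\eta\colon [0,b]\to X*Y$ lie in $\mathcal{L}_*$ with $\gamma(0)=\eta(0)=p$, and fix $t\in[0,a]$, $s\in[0,b]$, $c\in[0,1]$. Let $a_1,b_1$ be the parameters at which $\gamma,\eta$ respectively first leave the sheet containing $p$ (set to $a,b$ if they never do). The initial portions $\gamma|_{[0,a_1]}$ and $\eta|_{[0,b_1]}$ both lie in a single sheet and belong to $\mathcal{L}_X$ or $\mathcal{L}_Y$, and they start at a common point. I carry out a case analysis on the positions of $\gamma(t),\gamma(ct),\eta(s),\eta(cs)$. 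The base case $t\leq a_1, s\leq b_1$ is immediate: all four points lie in the initial sheet, $d_*$ restricts to $d_X$ or $d_Y$ there, and (sgCC2) in the factor applied to $\gamma|_{[0,a_1]}, \eta|_{[0,b_1]}$ yields the bound. When $t>a_1$ (or $s>b_1$), the distance $d_*(\gamma(t),\eta(s))$ decomposes additively via the $D_*$ formula as the in-sheet distance between the branching points $\gamma(a_1)$ and $\eta(b_1)$ plus tail contributions along the two diverging branches. I then distinguish the sub-cases $ct\leq a_1$ versus $ct>a_1$ (and similarly for $\eta$), applying (sgCC2) of the relevant factor within each sheet the midpoint passes through, and using that the tail-contributions rescale linearly as $c$ varies.

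The hard part will be the mixed sub-case where $\gamma$ and $\eta$ exit the initial sheet through distinct branching points and the midpoints $\gamma(ct),\eta(cs)$ lie asymmetrically (one still in the initial sheet, the other beyond an edge). There $d_*(\gamma(ct),\eta(cs))$ picks up an extra unit edge and possibly an extra $\|\overline{z}\|$ term from the $D_*$ formula which is not matched literally on the right-hand side; I control this by applying (sgCC2) in the initial sheet to the pair $(\gamma|_{[0,a_1]}, \eta|_{[0,b_1]})$, noting that the extra additive terms are bounded by constants coming from at most one edge ($+1$) plus an adjustment controlled by the $C$-term of (sgCC2) in the factor containing the further endpoint. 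Taking $E\coloneqq \max(E_X,E_Y)$ and $C$ equal to a bounded sum of $C_X,C_Y$ and an absolute constant absorbing the edge lengths, the finitely many cases collectively yield (sgCC2) for $\mathcal{L}_*$, completing the proof.
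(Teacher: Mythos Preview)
Your overall plan and construction of $\mathcal{L}_*$ match the paper's, but the verification of (sgCC2) has real gaps. First, your decomposition is anchored at the sheet containing $\gamma(0)$, whereas the correct picture is a tripod: $\gamma$ and $\eta$ may agree through several sheets and edges before they diverge inside some later sheet $S$, where they separate toward exit points $a,b\in S$ and then continue on disjoint branches. Your claimed additive splitting of $d_*(\gamma(t),\eta(s))$ into the in-sheet distance $d(\gamma(a_1),\eta(b_1))$ plus two diverging tails breaks down precisely when $\gamma(a_1)=\eta(b_1)$, i.e.\ when the paths leave the first sheet together, and nothing in your outline handles the passage to the actual branching sheet. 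Relatedly, ``applying (sgCC2) of the relevant factor within each sheet the midpoint passes through'' makes no sense beyond the branching sheet: on the tails $\gamma$ and $\eta$ lie in disjoint sheets with no common start point, so the factor (sgCC2) is simply unavailable there.

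Second, even after localising to the correct tripod, the assertion that ``tail-contributions rescale linearly as $c$ varies'' is where the proof really lives and it is not automatic. The in-sheet distance $d_*(a,b)$ is a fixed quantity, independent of $c$; to obtain the factor $c$ in the bound when $\gamma(ct),\eta(cs)$ lie near or beyond $a,b$ one needs an inequality of the form $d_*(a,b)\le c_1'\,d_*(\gamma(t),\eta(s))$ with $c_1'\le c$. The paper derives exactly this (its key inequality~\eqref{key}) from the triangle inequality together with the tree-graded structure of $d_*$, and it is what drives every case of the analysis; the obstruction in your mixed sub-case is this multiplicative one, not an additive edge-length that can be absorbed into $C$. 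Finally, $\mathcal{L}_*$ as built is not prefix-closed: for $t'<a$ the truncation $\gamma|_{[0,t']}$ need not belong to $\mathcal{L}_*$, so the estimate at the endpoints does not yet give (sgCC2) for intermediate $t,s$. The paper closes this last gap by showing each truncation stays within $C$ of the genuine $\mathcal{L}_*$-geodesic $\Gamma(\gamma(0),\gamma(t'))$, using (sgCC2) once more inside the terminal sheet.
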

In the rest of this section, we give the proof of Theorem~\ref{Ma}.
First, we construct the family of geodesic segments $\mathcal{L}_{*}$ in $X*Y$.
Let $(X_0,i_X,e_X)$ and $(Y_0,i_Y,e_Y)$ be nets of $X$ and $Y$, respectively, 
and let $\mathcal{S}$ be the sheets of $X*Y$.
Since $X$ and $Y$ are symmetric geodesic coarsely convex,
we can suppose that $X$ and $Y$ are $(1,0,E_X,C_X,\textrm{id}_{\mathbb{R}_{\geq 0}}, \mathcal{L}_X)$-coarsely convex 
and $(1,0,E_Y,C_Y,\textrm{id}_{\mathbb{R}_{\geq 0}}, \mathcal{L}_Y)$-coarsely convex, respectively.
We define
\begin{gather*}
E\coloneqq\max \{ E_X,E_Y \}, \\
C\coloneqq\max \{ C_X,C_Y \}.
\end{gather*}
\par 
For any $S \in \mathcal{S}$ and for any $x, y \in S$, we choose a good geodesic in $S$ from $x$ to $y$, denoted by $\gamma_{S}(x,y)$.
Let $E$ be the set of edges.
For $e \in E$ and $v,w \in e$, 
let $\gamma_{e}(v,w)$ be the geodesic segment on $e$ from $v$ to $w$. 
\par
Since $X$ and $Y$ are symmetric, we can choose it to be $\gamma_{S}(y,x)= \gamma_{S}(x,y)^{-1}$.
Note that $e \in E$ is a unit interval.
Then, for $e \in E$ and $v,w \in e$, we have $\gamma_{e}(w,v)=\gamma_{e}(v,w)^{-1}$. 
We define 
\[ \mathcal{L}\coloneqq\{\gamma_{S}(x,y) \mid S \in \mathcal{S}, \ x,y \in S\}
\bigsqcup
\{
\gamma_{e}(v,w) \mid e \in E, v,w \in e
\} \]
\par 
For $a,b \in X*Y$, we define a geodesic segment from $a$ to $b$
by connecting $\gamma \in \mathcal{L}$ at the identified points, denoted by $\Gamma(a,b)$.
Firstly, we suppose that $a$ and $b$ belong to sheets $S_1$ and $S_2$, respectively.
Let $e_{S_1}$ and $e_{S_2}$ be base points of $S_1$ and $S_2$ 
and let $(\omega, u)$ and $(\tau, v)$ be coordinate of $a$ and $b$, respectively.
Here, $\omega,\tau \in W$ and $u,v\in X\sqcup Y$, as in Notation~\ref{nt:coord}.
We consider the following three cases.
\begin{enumerate}[label=(\Roman*)]
 \item \label{item:w=t1}
       $\omega$ and $\tau$ are equal.

       In this case, first, we suppose that $\omega = \tau \neq \epsilon$ or $S_1 =S_2$. Then we define
       \begin{align*}
       \Gamma(a,b)(t) \coloneqq \gamma_{S_1}(u,v)(t). 
       \end{align*}
       Next, we suppose that $\omega = \tau = \epsilon$ and $S_1 \neq S_2$. 
       Let $e_{\epsilon}=\{\epsilon\} \times [0,1]$.
       Then we define
       \begin{align*}
       \Gamma(a,b)(t) \coloneqq \begin{cases}
                            \gamma_{S_1}(u,e_{S_1})(t) &  \text{for}\  0 \leq t \leq \norm{u}, \\
                            \gamma_{e_{\epsilon}}(e_{S_1}, e_{S_2})(t-t_1)                &  \text{for}\  t_1 \leq t \leq t_1+1, \\
                            \gamma_{S_2}(e_{S_2},v)(t-t_2)     & \text{for}\ t_2 \leq t \leq t_2 +\norm{v},
                           \end{cases}
       \end{align*}
       where $t_1=\norm{u}$ and $t_2=t_1+1$.
 \item \label{item:w-sub-t1} 
$\omega$ is a proper subword of $\tau$.

       In this case, there exists $\tau'\in W\setminus \{\epsilon\}$ such that $\tau=\omega\tau'$. 
       Let $z$ be the initial letter of $\tau'$. 
       We define the geodesic segment from $a$ to $b$ inductively for the length of $\tau^{\prime}$.
       First, we assume that $\omega$ is not the empty word and the height of $\tau^{\prime}$ is equal to 1, that is, $\tau^{\prime}=z$.
       Let $e=\{\omega\} \times \{z\} \times [0,1]$
       Then we define 
       \begin{align*}
        \Gamma(a,b)(t)\coloneqq 
        \begin{cases}
       \gamma_{S_1}(u, \overline{z})(t) & \text{for}\ 0 \leq t \leq d_*(u,\overline{z}),\\ 
       \gamma_{e}(\overline{z},e_{S_2})(t-t_1)                   & \text{for}\ t_1 \leq t \leq t_1+1,\\
       \gamma_{S_2}(e_{S_2}, v)(t-t_2) &\text{for}\ t_2 \leq t \leq t_2+ \norm{v},
        \end{cases}
       \end{align*}
       where $t_1=d_*(u,\overline{z})$ and $t_2=t_1+1$.
       Next, we assume that  $\omega$ is the empty word and $\tau^{\prime}=z$.
       Let $e'=\{\epsilon\} \times \{z\} \times [0,1]$.
       If $\{u,\overline{z}\}\subset X$ or $\{u,\overline{z}\}\subset Y$ holds, 
       then we define
       \begin{align*}
      \Gamma(a,b)(t) \coloneqq 
      \begin{cases}
       \gamma_{S_1}(u, \overline{z})(t) & \text{for}\ 0 \leq t \leq d_*(u,\overline{z}),\\ 
       \gamma_{e'}(\overline{z}, e_{S_2})(t-t_1)                   & \text{for}\ t_1 \leq t \leq t_1+1,\\
       \gamma_{S_2}(e_{S_2}, v)(t-t_2) &\text{for}\ t_2 \leq t \leq t_2+ \norm{v},
        \end{cases}
       \end{align*}
       where $t_1=d_*(u,\overline{z})$ and $t_2=t_1+1$.
       Otherwise, then we define
       \begin{align*}
        \Gamma(a,b)(t) \coloneqq \begin{cases}
       \gamma_{S_1}(u, e_{S_1})(t) & \text{for}\ 0 \leq t \leq \norm{u},\\ 
       \gamma_{e_{\epsilon}}(e_{S_1},e_{S_0})(t-t_1)                & \text{for}\ t_1 \leq t \leq t_1+1,\\
       \gamma_{S_0}(e_{S_0}, \overline{z})(t-t_2) &\text{for}\ t_2 \leq t \leq t_2+ \norm{\overline{z}},\\
       \gamma_{e'}(\overline{z}, e_{S_2})(t-t_3) &\text{for}\ t_3 \leq t \leq t_3+1, \\
       \gamma_{S_2}(e_{S_2},v)(t-t_4) &\text{for}\ t_4 \leq t \leq t_4+ \norm{v},
        \end{cases}
       \end{align*}
       where $t_1=\norm{u}$, $t_2=t_1+1$, $t_3=t_2+ \norm{\overline{z}}$, and $t_4=t_3+1$.
       Here, $S_0=\epsilon X$ or $S_0=\epsilon Y$. 
       We denote by $e_{S_0}$, the base point of $S_0$.
       From the construction, it is clear that $\Gamma(a,b)$ is a geodesic segment.
       Moreover, since $\gamma_S(y,x)=\gamma_S(x,y)^{-1}$ for any $S \in \mathcal{S}$ 
       and $\gamma_{e}(w,v)=\gamma_{e}(v,w)^{-1}$ for any $e \in E$, 
       then we have $\Gamma(b,a)=\Gamma(a,b)^{-1}$. 
       \par Finally, we suppose that the length of $\tau^{\prime}$ is greater than $1$.
       Let $\tau^{\prime}=l_0l_1l_2 \dots l_n$
       and let $a^{\prime}=( \omega l_0l_1 \dots l_{n-1}, \overline{l_n})$.
       Let $\hat{e}=\{\omega l_0l_1 \dots l_{n-1}\} \times \{l_n\} \times [0,1]$. 
       By the assumption of induction, $\Gamma(a, a^{\prime})$ is defined.
       Then we define $\Gamma(a,b)$ as follows:
       \begin{align*}
       \Gamma(a,b) \coloneqq \begin{cases}
       \Gamma(a,a^{\prime})(t) & \text{for}\ 0 \leq t \leq d_*(a,a^{\prime}), \\
       \gamma_{\hat{e}}(\overline{l_n}, e_{S_2})(t-t_1) &\text{for}\ t_1 \leq t \leq t_1+1, \\
       \gamma_{S_2}(e_{S_2}, v)(t-t_2) &\text{for}\ t_2 \leq t \leq t_2 + \norm{v}, \\
       \end{cases}
       \end{align*}
       where $t_1=d_*(a,a^{\prime})$, $t_2=t_1+1$.
 \item Neither \ref{item:w=t1} nor \ref{item:w-sub-t1}.

       In this case, there exist the maximal common prefix $\rho\in W$ (possibly the empty word) and
       $\omega',\tau'\in W \setminus \{\epsilon\}$
       such that $\omega=\rho\omega'$ and $\tau = \rho\tau'$. 
       Let $z_0$ and $w_0$ be the initial letter of $\omega'$ and $\tau'$, respectively.
       Set $a^{\prime}=(\rho, \overline{z_0})$ and $b^{\prime}=(\rho, \overline{w_0})$.
       Then we define 
       \begin{align*}
        \Gamma(a,b) \coloneqq \begin{cases} 
        \Gamma(a, a^{\prime})(t) & \text{for}\ 0 \leq t \leq d_*(a,a^{\prime}), \\
        \Gamma(a^{\prime}, b^{\prime})(t-t_1)& \text{for}\ t_1 \leq t \leq t_1+d_*(\overline{z_0}, \overline{w_0}), \\
        \Gamma(b^{\prime},b)(t-t_2) & \text{for}\ t_2 \leq t \leq t_2+d_*(b^{\prime},b),
        \end{cases}
       \end{align*}
       where $t_1=d_*(a,a^{\prime})$ and $t_2=t_1+d_*(\overline{z_0}, \overline{w_0})$.
      
\end{enumerate}
 Finally, we extend $\Gamma(a,b)$ on edges in an obvious way. 
 We denote by $[a,b]$, the image of the geodesic segment $\Gamma(a,b)$.
Define
\[ \mathcal{L}_{*}\coloneqq\{ \Gamma(a,b) \mid a,b \in X*Y\}. \] 
By the definition of $\Gamma(a,b) \in \mathcal{L}_{*}$, we have $\Gamma(b,a)=\Gamma(a,b)^{-1}$.
Therefore, the family of geodesic segments $\mathcal{L}_{*}$ is symmetric.

\begin{example}
Let $a=(ux_0y_0,x)$ and $b=(ux_0^{\prime},y)$, 
where $u \in W_X$ is the maximal common prefix, $x_0,  x^{\prime}_0 \in X_0^*$, $y_0 \in Y_0^*$, $x \in X$, and $y \in Y$.
Let $e_1=\{ux_0\} \times \{y_0\} \times [0,1]$,
$e_2=\{u\} \times \{x_0\} \times [0,1]$, and
$e_3=\{u\} \times \{x_0'\} \times [0,1]$. 
Then the path $\Gamma(a,b) : [0, d_*(a,b)] \to X*Y$ 
is given as follows (see Figure \ref{gab}):
\begin{equation*}
\Gamma(a,b)(t) = \begin{cases} 
              \gamma_{ux_0y_0X}(x, e_X)(t) & 0 \leq t \leq d_X(e_X,x),\\
              \gamma_{e_1}(e_X, \overline{y_0})(t-t_1) & t_1 \leq t \leq t_1 +1, \\ 
              \gamma_{ux_0Y}(\overline{y_0}, e_Y)(t- (t_1+1)) & t_1 +1 \leq t \leq t_1+d_Y(e_Y,\overline{y_0})+1, \\
              \gamma_{e_2}(e_Y, \overline{x_0})(t-t_2) & t_2  \leq t \leq t_2 +1, \\
              \gamma_{uX}(\overline{x_0}, \overline{x_0^{\prime}})(t-(t_2+1)) & t_2+1 \leq t \leq t_2 + d_X(\overline{x_0},\overline{x_0^{\prime}})+1, \\
              \gamma_{e_3}(\overline{x_0'},e_Y)(t-t_3) & t_3 \leq t\leq t_3+1, \\
              \gamma_{ux_0^{\prime}Y}(e_Y,y)(t-(t_3+1)) & t_3 +1 \leq t \leq t_3+d_Y(e_Y,y)+1,
                   \end{cases}        
\end{equation*}
where $t_1=d_X(e_X,x)$, $t_2=t_1+d_Y(e_Y,\overline{y_0})+1$, and $t_3=t_2 + d_X(\overline{x_0},\overline{x_0^{\prime}})+1$. 
       \begin{figure}[h]
       \begin{tikzpicture}[scale=1]
       \draw(0,0)--++(4,0)--++(-1,-1)--++(-4,0)--cycle; 
       \draw(8,0)--++(4,0)--++(-1,-1)--++(-4,0)--cycle; 
       \draw(4,0)node[right]{$u x_0 Y$};
       \draw(12,0)node[right]{$u x^{\prime}_0 Y$};
       \draw(4,-2)--++(4,0)--++(-1,-1)--++(-4,0)--cycle; 
       \draw(5,-2.5)--++(-1.75,1.75);
       \draw[dashed](3.25,-0.75)--++(-0.25,0.25);
       \draw(6,-2.5)--++(1.5,1.5);
       \draw[dashed](7.5,-1)--++(0.25,0.25);
       \draw(8,-0.75)node[above]{$e_Y$};
       \fill(7.75,-0.75)circle(0.06);
       \draw(9,-0.75)node[above]{$y$};
       \fill(9,-0.75)circle(0.06);
       \draw(7.75,-0.75)--(9,-0.75);
       \draw[dashed](4,-2.5)--++(0,-0.5);
       \draw(4,-3)--++(0,-1);
       \draw(4.9,-2.5)node[left]{$\overline{x_0}$};
       \fill(5,-2.5)circle(0.06);
       \draw(6.5,-2.5)node{$\overline{x^{\prime}_0}$};
       \fill(6,-2.5)circle(0.06);
       \draw(5,-2.5)--(6,-2.5);
       \draw(8,-2)node[right]{$u X$};
       \draw(1,-0.5)--++(0,1.5);
       \draw[dashed](1,1)--++(0,0.5);
       \draw(1,-0.5)--++(0.75,0.75);%
       \draw[dashed](1.75,0.25)--++(0.5,0.5);
       \draw(1,-0.5)--++(-0.75,0.75);
       \draw[dashed](0.25,0.25)--++(-0.5,0.5);
       \draw(3,-0.5)node[above]{$e_Y$};
       \draw(0.5,-0.5)node{$\overline{y_0}$};
       \draw(3,-0.5)--(1,-0.5);
       \fill(3,-0.5)circle(0.06);
       \fill(1,-0.5)circle(0.06);
       \draw(1,1.5)node[left]{$e_X$};
       \draw(2,1.5)node[above]{$x$};
       \draw(2,1.5)--(1,1.5);
       \fill(1,1.5)circle(0.06);
       \fill(2,1.5)circle(0.06);
       \draw(0,2)--++(4,0)--++(-1,-1)--++(-4,0)--cycle; 
       \draw(4,2)node[right]{$u x_0y_0 X$};
       \end{tikzpicture}
       \caption{An example of $\Gamma(a,b) \in \mathcal{L}_*$}
       \label{gab}
       \end{figure}
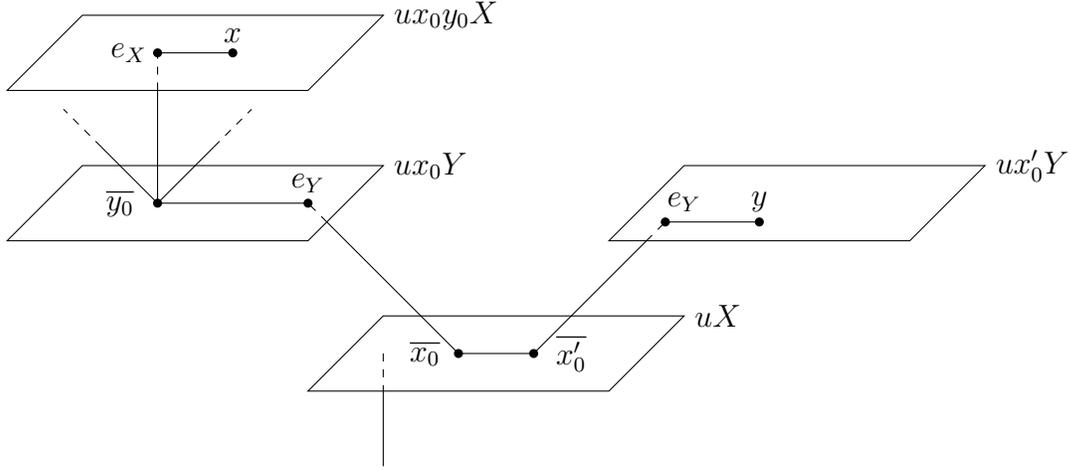
\end{example}

\begin{proposition}\label{Propo L}
The family of geodesic segments $\mathcal{L}_*$ satisfies $\mathrm{(sgCC1)}$ and $\mathrm{(sgCC2)}$.
\end{proposition}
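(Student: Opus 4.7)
Property (sgCC1) is immediate from the construction: for each $a, b \in X * Y$ the segment $\Gamma(a,b) \in \mathcal{L}_*$ connects $a$ to $b$, and comparing the three cases in the construction of $\Gamma(a,b)$ with the three cases in the definition of $d_*$ shows that its arc-length equals $d_*(a,b)$, so $\Gamma(a,b)$ is a geodesic segment.

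The work lies in (sgCC2). Let $\gamma = \Gamma(a, b_1)$ and $\eta = \Gamma(a, b_2)$ be two good geodesics starting at $a = (\omega, u)$, and let $S_a$ denote the sheet of $X*Y$ containing $a$. Each of $\gamma$ and $\eta$ begins with a good geodesic inside $S_a$ running from $u$ to a distinguished point $v_\gamma, v_\eta \in S_a$: this target is $b_i$ itself if $b_i \in S_a$, and is otherwise the image of the net element corresponding to the first edge crossed on the way to $b_i$ (as determined by the index word of $b_i$). The plan is to decompose both geodesics into their sheet-portions and edge-portions, locate the point where they diverge, and reduce the required inequality to (sgCC2) of $X$ or $Y$ applied inside $S_a$.

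The case analysis runs as follows. If $v_\gamma = v_\eta$, then the two geodesics share a common initial segment inside $S_a$, and beyond that point one iterates the same argument on the smaller pair of endpoints obtained after crossing the common edge — this handles the bookkeeping of common prefixes. If $v_\gamma \neq v_\eta$, or one endpoint lies in $S_a$ while the other does not, then one applies (sgCC2) of $S_a$ to the two inside-$S_a$ good geodesics from $u$ to $v_\gamma$ and $v_\eta$ to bound $d_*(\gamma(ct), \eta(cs))$ while both points still belong to $S_a$; once one of them has left $S_a$, any path realizing the $d_*$-distance must pass back through one of the exit vertices, so the formulas (I)--(III) for $d_*$ show that the edge-lengths and the distances traveled beyond $S_a$ contribute the same terms to both sides of the desired inequality, while the remaining in-$S_a$ piece is controlled by (sgCC2) of $S_a$ with multiplicative factor $cE$ and additive constant $C$ (using $E = \max\{E_X, E_Y\}$ and $C = \max\{C_X, C_Y\}$).

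The main obstacle is the accounting in the mixed cases where, say, $\gamma(ct)$ has already left $S_a$ while $\eta(cs)$ has not, and in the common-prefix case where the induction has to thread through edges. In both situations the essential input is the observation that the edges and beyond-$S_a$ portions contribute isometric pieces to $d_*$ that do not interact with the convexity estimate, so that after carefully stripping them off, the required bound reduces to (sgCC2) of $X$ or $Y$ applied to the inside-$S_a$ pieces of $\gamma$ and $\eta$.
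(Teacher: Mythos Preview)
Your outline for (sgCC1) is fine and matches the paper. For (sgCC2), the tripod picture is right---the two geodesics share a segment $[\Gamma_1(0),p]$ and then diverge inside a single sheet $S$ through points $a=\Gamma_1(c_1t)$ and $b=\Gamma_2(c_1's)$---but the claim that the in-sheet piece is handled directly by (sgCC2) of $S$, and that the beyond-sheet pieces ``do not interact with the convexity estimate,'' conceals the real difficulty.

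The issue is a reparametrisation mismatch. When $\Gamma_1(ct)$ and $\Gamma_2(cs)$ both lie in $S$, they sit at fractions $c/c_1$ and $c/c_1'$ along the sheet geodesics $\gamma_S(p,a)$ and $\gamma_S(p,b)$; these fractions differ, so (sgCC2) of $S$ does not apply with a single common ratio. The paper repairs this by introducing an auxiliary point $a'\in[p,a]$ at the matching fraction, bounding $d_*(a',\Gamma_2(cs))$ via (sgCC2) of $S$ and $d_*(a',\Gamma_1(ct))$ separately; both steps, as well as the case where one or both points have left $S$, depend on the key inequality
\[
d_*(a,b)\;\le\;c_1'\,d_*\bigl(\Gamma_1(t),\Gamma_2(s)\bigr),
\]
which comes from the tree structure (the geodesic $\Gamma_1(t)\to\Gamma_2(s)$ passes through $a$ and $b$) plus the triangle inequality. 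Without it, for example in the case where both points have already left $S$, the term $d_*(a,b)$ appears unscaled on the left and cannot be absorbed into $c\cdot d_*(\Gamma_1(t),\Gamma_2(s))$; so the ``same terms on both sides'' bookkeeping you describe does not close. Your plan never isolates this inequality, and that is the missing idea. A smaller omission: (sgCC2) must be verified for all $t'\le t$, $s'\le s$, but $\mathcal{L}_*$ is not prefix-closed; the paper needs a short extra argument showing that a truncated path stays within $C$ of a genuine element of $\mathcal{L}_*$.
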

\begin{proof}
We remark that $X$ and $Y$ are geodesic spaces.
Then, by the definition of $\mathcal{L}_{*}$, it is clear that $\mathcal{L}_{*}$ satisfies (sgCC1).
\par We will prove that $\mathcal{L}_{*}$ satisfies  (sgCC2).
Let $\Gamma_1 \in \mathcal{L}_{*}$ with $\Gamma_1: [0,t] \to X*Y$ and let $\Gamma_2 \in \mathcal{L}_{*}$ with $\Gamma_2:[0,s] \to X*Y$.
We suppose that $\Gamma_1(0)=\Gamma_2(0)$.
The geodesic triangle $\Delta(\Gamma_1(0), \Gamma_1(t), \Gamma_2(s))$ has the following form (see Figure \ref{gt}):
There exist $S \in \mathcal{S}$ and $p, a, b \in S$ such that
\begin{gather*}
[\Gamma_1(0), \Gamma_1(t)]= [\Gamma_1(0), p] \cup [p,a] \cup [a, \Gamma_1(t)], \\ 
[\Gamma_1(0), \Gamma_2(s)]= [\Gamma_1(0), p] \cup [p,b] \cup [b, \Gamma_2(s)], \\
[\Gamma_1(t), \Gamma_2(s)]= [\Gamma_1(t), a] \cup [a,b] \cup [b, \Gamma_2(s)]. 
\end{gather*}
We set 
\begin{align*}
p&\coloneqq\Gamma_1(c_0t)=\Gamma_2(c_0^{\prime}s), \\
a&\coloneqq\Gamma_1(c_1t), \\
b&\coloneqq\Gamma_2(c_1^{\prime}s),
\end{align*}
where $c_0, c_1, c_0^{\prime}, c_1^{\prime} \in [0,1]$.
\begin{figure}[h]
\begin{tikzpicture}
\draw(0,0)--++(6,0)--++(-1,-1.5)--++(-6,0)--cycle; 
\draw(5,-1.5)node[below]{$S$};
\draw[dashed](1,-1)--++(-0.5,-0.5);
\draw(0.5,-1.5)--++(-1.5,-1.5);
\draw(-1,-3)node[below]{$\Gamma_1(0)$};
\fill(-1,-3)circle(0.06);
\draw(1,-1)--++(2,0.75)--++(0.75,-0.75)--cycle;
\draw(1,-1)node[below]{$p$};
\draw(3.5,-0.25)node[left]{$a$};
\draw(3.75,-1)node[below]{$b$};
\draw(3,-0.25)--++(1,2);
\draw(4,1.75)node[above]{$\Gamma_1(t)$};
\fill(4,1.75)circle(0.06);
\draw(3.75,-1)--++(3,1);
\draw(6.75,0)node[above]{$\Gamma_2(s)$};
\fill(6.75,0)circle(0.06);
\end{tikzpicture}
\caption{An example of geodesic triangles of $X*Y$.}
\label{gt}
\end{figure}
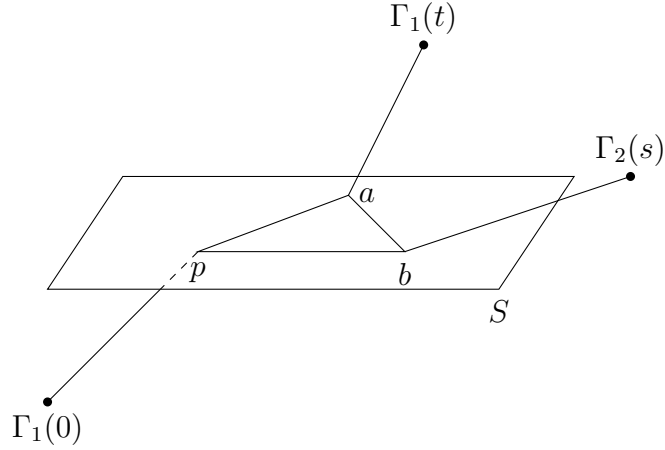
Since $\Gamma_1$ and $\Gamma_2$ are geodesics, we have $c_0t=c_0^{\prime}s$.
Without loss of generality, we may assume that $c_1 \leq c_1^{\prime}$.
Since $\Gamma_1, \Gamma_2$ are geodesics with respect to the metric $d_*$,
we have 
\begin{align*}
 & c_1^{\prime} \cdot d_*(\Gamma_1(t), \Gamma_2(s)) - d_*(a,b) \\
 &= c_1^{\prime}\{ d_*(\Gamma_1(t), a) + d_*(a,b) + d_*(b, \Gamma_2(s))\} - d_*(a,b)\\
                                                                         & =  c_1^{\prime}d_*(\Gamma_1(t), a) + c_1^{\prime}d_*(b, \Gamma_2(s)) - (1-c_1^{\prime})d_*(a,b) \\
                                                                         & =  c_1^{\prime}d_*(\Gamma_1(t), \Gamma_1(c_1t)) + c_1^{\prime}d_*(\Gamma_2(c_1^{\prime}s), \Gamma_2(s)) - (1-c_1^{\prime})d_*(a,b) \\
                                                                         & =  c_1^{\prime}(1-c_1)d_*(\Gamma_1(t), \Gamma_1(0)) + c_1^{\prime}(1-c_1^{\prime})d_*(\Gamma_2(0), \Gamma_2(s)) - (1-c_1^{\prime})d_*(a,b). 
\end{align*}
Moreover, by $c_1 \leq c_1^{\prime}$, the right-hand side of this equality can be estimated as follows:
\begin{align*}
& c_1^{\prime}(1-c_1)d_*(\Gamma_1(t), \Gamma_1(0)) + c_1^{\prime}(1-c_1^{\prime})d_*(\Gamma_2(0), \Gamma_2(s)) - (1-c_1^{\prime})d_*(a,b) \\
                                                                         &\geq c_1^{\prime}(1-c_1^{\prime})d_*(\Gamma_1(t), \Gamma_1(0)) + c_1^{\prime}(1-c_1^{\prime})d_*(\Gamma_2(0), \Gamma_2(s)) - (1-c_1^{\prime})d_*(a,b) \\
                                                                         &= (1-c_1^{\prime})\{ c_1^{\prime}d_*(\Gamma_1(t), \Gamma_1(0)) + c_1^{\prime}d_*(\Gamma_2(0), \Gamma_2(s)) - d_*(a,b)\} \\
                                                                         &\geq (1-c_1^{\prime})\{ c_1d_*(\Gamma_1(t), \Gamma_1(0)) + c_1^{\prime}d_*(\Gamma_2(0), \Gamma_2(s)) - d_*(a,b)\} \\
                                                                         &= (1-c_1^{\prime})\{ d_*(a, \Gamma_1(0)) + d_*(\Gamma_2(0),b) - d_*(a,b)\} \\
                                                                         &\geq 0.
\end{align*}
The last inequality follows from the triangle inequality.
Then, we have 
\begin{equation}\label{key}
 d_*(a,b) \leq c_1^{\prime} \cdot d_*(\Gamma_1(t), \Gamma_2(s)).
\end{equation}
\par We will show that 
there exist $E_{*} \geq 1$ and $C_* \geq 0$ depending only on $E$ and $C$ such that
for all $c \in [0,1]$, 
\[ d_*(\Gamma_1(ct), \Gamma_2(cs)) \leq cE_*d_*(\Gamma_1(t), \Gamma_2(s))+C_*.\]
We divide the proof into the following cases: 
\begin{enumerate}
\renewcommand{\labelenumi}{\Roman{enumi}).}
\item $\Gamma_1(ct) \in [\Gamma_2(0),\Gamma_2(cs)]$, or $\Gamma_2(cs) \in [\Gamma_1(0),\Gamma_1(ct)]$.
\item \label{itemI)} $\Gamma_1(ct) \in [p,a]$ and $\Gamma_2(cs) \in [p,b]$ (see Figure \ref{I)}).
\item \label{itemII)}$\Gamma_1(ct) \in [p,a]$ and $\Gamma_2(cs) \in [b, \Gamma_2(s)]$ (see Figure \ref{II)}).
\item \label{itemIII)}$\Gamma_1(ct) \in [a, \Gamma_1(t)]$ and $\Gamma_2(cs) \in [b, \Gamma_2(s)]$ (see Figure \ref{III)}).
\end{enumerate}
\par 
We consider case I).
Note that $\Gamma_1(0)=\Gamma_2(0)$.
In case I), we suppose that $\Gamma_1(0)$, $\Gamma_1(ct)$, and $\Gamma_2(cs)$ are on the same geodesic segment.
Then we have that
\begin{align*}
d_*(\Gamma_1(ct),\Gamma_2(cs))&=c|t-s|\\
                                              &\leq c d_*(\Gamma_1(t),\Gamma_2(s)).
\end{align*}
\par
In case II), we can put
\begin{align*}
\Gamma_1(ct)&=\gamma_{S}(p,a)(ct-c_0t), \\
\Gamma_2(cs)& =\gamma_{S}(p,b)(cs-c_0^{\prime}s).
\end{align*}
\begin{figure}[h]
\begin{tikzpicture}
\draw(0,0)--++(2,1)--++(0,-2)--cycle;
\draw(-2,0)--++(2,0);
\draw(-2,0)node[left]{$\Gamma_1(0)$};
\draw(0,0)node[above]{$p$};
\fill(-2,0)circle(0.06);
\draw(2,1)--++(1,1.5);
\draw(2,1)node[above]{$a$};
\draw(3,2.5)node[above]{$\Gamma_1(t)$};
\fill(3,2.5)circle(0.06);
\draw(2,-1)--++(1,-1.5);
\draw(2,-1)node[right]{$b$};
\draw(3,-2.5)node[below]{$\Gamma_2(s)$};
\fill(3,-2.5)circle(0.06);
\draw(0.8,0.5)node[above]{$\Gamma_1(ct)$};
\fill(1,0.5)circle(0.06);
\draw(1,-1)node{$\Gamma_2(cs)$};
\fill(1,-0.5)circle(0.06);
\end{tikzpicture}
\caption{Case II)}
\label{I)}
\end{figure}
Set $T\coloneqq(c_1-c_0)t$ and $S\coloneqq(c_1^{\prime}-c_0^{\prime})s$.
Then we have
\begin{align*}
p&=\gamma_{S}(p,a)\left( 0 \right)=\gamma_{S}(p,b)\left( 0 \right), \\
\Gamma_1(ct)&=\gamma_{S}(p,a)\left( \dfrac{c-c_0}{c_1-c_0}T\right),                                     & a &=\gamma_{S}(p,a)\left(T\right), \\ 
\Gamma_2(cs)&=\gamma_{S}(p,b)\left( \dfrac{c-c_0^{\prime}}{c_1^{\prime}-c_0^{\prime}}S\right), & b &=\gamma_{S}(p,b)\left(S\right).
\end{align*}
We define $a^{\prime} \in [p,a]$ to be
\[ a^{\prime}\coloneqq \gamma_{S}(p,a)\left( \dfrac{c-c_0^{\prime}}{c_1^{\prime}-c_0^{\prime}}T\right) .\]
Firstly, by the coarsely convex inequality in $S$ and inequality \eqref{key}, we have
\begin{align}
d_*(a^{\prime}, \Gamma_2(cs)) 
&=d_* \left( \gamma_{S}(p,a)\left( \dfrac{c-c_0^{\prime}}{c_1^{\prime}-c_0^{\prime}}T\right) , \gamma_{S}(p,b)\left( \dfrac{c-c_0^{\prime}}{c_1^{\prime}-c_0^{\prime}}S\right) \right)  \notag \\
&\leq \dfrac{c-c_0^{\prime}}{c_1^{\prime}-c_0^{\prime}} \cdot Ed_*(\gamma_{S}(p,a)(T), \gamma_{S}(p,b)(S)) +C\notag \\
&= \dfrac{c-c_0^{\prime}}{c_1^{\prime}-c_0^{\prime}} \cdot Ed_*(a, b) +C\notag \\
&\leq \dfrac{cc_1^{\prime}-c_0^{\prime}c_1^{\prime}}{c_1^{\prime}-c_0^{\prime}} \cdot Ed_*(\Gamma_1(t), \Gamma_2(s)) +C\notag \\
&\leq c \cdot Ed_*(\Gamma_1(t), \Gamma_2(s)) +C. \label{1}
\end{align}
Inequality \eqref{1} follows from $c_0^{\prime} \leq c \leq c_1^{\prime}$.
\par Next, we consider the distance between $a^{\prime}$ and $\Gamma_1(ct)$.
Then
\begin{align*}
d_*(a^{\prime}, \Gamma_1(ct)) 
&=d_* \left( \gamma_{S}(p,a)\left( \dfrac{c-c_0^{\prime}}{c_1^{\prime}-c_0^{\prime}}T\right) , \gamma_{S}(p,a)\left( \dfrac{c-c_0}{c_1-c_0}T\right) \right)  \notag \\
&=\left| \dfrac{c-c_0^{\prime}}{c_1^{\prime}-c_0^{\prime}}T -\dfrac{c-c_0}{c_1-c_0}T\right|
\end{align*}
By the triangle inequality $ (c_1-c_0)t  \leq (c_1^{\prime}-c_0^{\prime})s+ d_*(a,b)$ and inequality \eqref{key}, we have
\begin{align*}
\dfrac{c-c_0^{\prime}}{c_1^{\prime}-c_0^{\prime}}T -\dfrac{c-c_0}{c_1-c_0}T 
&=\dfrac{c-c_0^{\prime}}{c_1^{\prime}-c_0^{\prime}} (c_1-c_0)t  -\dfrac{c-c_0}{c_1-c_0}(c_1-c_0)t   \\
&\leq \dfrac{c-c_0^{\prime}}{c_1^{\prime}-c_0^{\prime}}  \{ (c_1^{\prime}-c_0^{\prime})s+ d_*(a,b)\} -(c-c_0)t   \\
&\leq (c-c_0^{\prime})s -(c-c_0)t +\dfrac{c-c_0^{\prime}}{c_1^{\prime}-c_0^{\prime}} d_*(a,b) \\
&\leq c(s-t) + cd_*(\Gamma_1(t),\Gamma_2(s)) \\
&\leq c \cdot \{2d_*(\Gamma_1(t), \Gamma_2(s))\}.
\end{align*}
Similarly, by the triangle inequality $ (c_1^{\prime}-c_0^{\prime})s - d_*(a,b)  \leq  (c_1-c_0)t$ and inequality \eqref{key}, we have
\begin{align*}
\dfrac{c-c_0}{c_1-c_0}T  - \dfrac{c-c_0^{\prime}}{c_1^{\prime}-c_0^{\prime}}T 
&=\dfrac{c-c_0}{c_1-c_0}(c_1-c_0)t  - \dfrac{c-c_0^{\prime}}{c_1^{\prime}-c_0^{\prime}} (c_1-c_0)t  \\
&\leq (c-c_0)t -\dfrac{c-c_0^{\prime}}{c_1^{\prime}-c_0^{\prime}}  \{ (c_1^{\prime}-c_0^{\prime})s - d_*(a,b)\} \\
&= (c-c_0)t - (c-c_0^{\prime})s +\dfrac{c-c_0^{\prime}}{c_1^{\prime}-c_0^{\prime}} d_*(a,b) \\
&\leq c(t-s) + cd_*(\Gamma_1(t),\Gamma_2(s)) \\
&\leq c \cdot \{2d_*(\Gamma_1(t), \Gamma_2(s))\}.
\end{align*}
Then we have
\begin{align}
d_*(a^{\prime}, \Gamma_1(ct))
&=\left| \dfrac{c-c_0^{\prime}}{c_1^{\prime}-c_0^{\prime}}T -\dfrac{c-c_0}{c_1-c_0}T\right| \notag \\
&\leq c \cdot \{2d_*(\Gamma_1(t), \Gamma_2(s))\}. \label{2}
\end{align}
Combining \eqref{1} and \eqref{2} yields
\begin{align*}
d_*(\Gamma_1(ct),\Gamma_2(cs)) &\leq d_*(\Gamma_1(ct),a^{\prime}) + d_*(a^{\prime}, \Gamma_2(cs)) \\ 
                                              & \leq c \cdot \{2d_*(\Gamma_1(t), \Gamma_2(s))\} + c \cdot Ed_*(\Gamma_1(t), \Gamma_2(s)) +C \\
                                              & = c(2+E) d_*(\Gamma_1(t), \Gamma_2(s)) +C.
\end{align*}
\par 
We consider case III).
In case III), we supposed that $\Gamma_1(ct) \in [p,a]$ and $\Gamma_2(cs) \in [b, \Gamma_2(s)]$,
where $a=\Gamma_1(c_1t)$ and $b=\Gamma_2(c^{\prime}_1s)$.
Let  $t^{\prime} \coloneqq c_1t$ and $s^{\prime} \coloneqq c^{\prime}_1s$.
We define
\begin{gather*}
a^{\prime} \coloneqq \Gamma_1(ct^{\prime}), \\ 
b^{\prime} \coloneqq \Gamma_2(cs^{\prime}).
\end{gather*}
\begin{figure}[h]
\begin{tikzpicture}
\draw(0,0)--++(2,1)--++(0,-2)--cycle;
\draw(-2,0)--++(2,0);
\draw(-2,0)node[left]{$\Gamma_1(0)$};
\draw(0,0)node[above]{$p$};
\fill(-2,0)circle(0.06);
\draw(2,1)--++(1,1.5);
\draw(2,1)node[above]{$a$};
\draw(3,2.5)node[above]{$\Gamma_1(t)$};
\fill(3,2.5)circle(0.06);
\draw(2,-1)--++(1,-1.5);
\draw(2,-1)node[right]{$b$};
\draw(3,-2.5)node[below]{$\Gamma_2(s)$};
\fill(3,-2.5)circle(0.06);
\draw(0.8,0.5)node[above]{$\Gamma_1(ct)$};
\fill(1,0.5)circle(0.06);
\draw(0.75,0.35)node[below]{$a^{\prime}$};
\fill(0.5,0.25)circle(0.06);
\draw(1,-0.8)node{$b^{\prime}$};
\fill(1,-0.5)circle(0.06);
\draw(3,-1.5)node{$\Gamma_2(cs)$};
\fill(2.5,-1.75)circle(0.06);
\end{tikzpicture}
\caption{Case III)}
\label{II)}
\end{figure}
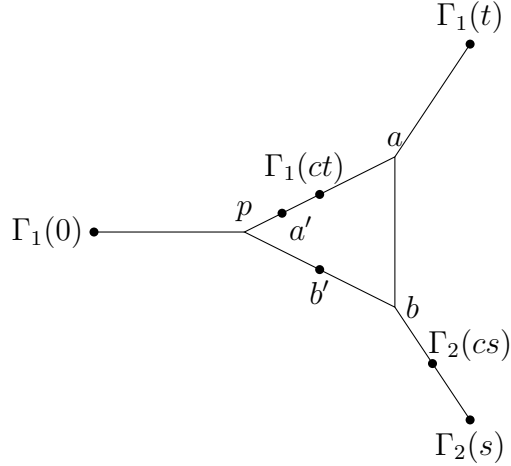
Since $a=\Gamma_1(t^{\prime})$ and $b=\Gamma_2(s^{\prime})$, we have $a^{\prime} \in [\Gamma_1(0), p] \cup [p,a]$ and $b^{\prime} \in [\Gamma_1(0), p] \cup [p,b]$.
By the same argument as in case I) or case II), we have
\begin{align*}
d_*(a^{\prime}, b^{\prime})
&=d_*(\Gamma_1(ct^{\prime}), \Gamma_2(cs^{\prime}) ) \\
&\leq  c(2+E) d_*(\Gamma_1(t^{\prime}), \Gamma_2(s^{\prime})) + C \\
&=c(2+E) d_*(a,b) +C .
\end{align*}
We remark that $d_*(\Gamma_1(t),\Gamma_2(s)) = d_*(\Gamma_1(t),a)+d_*(a,b)+d_*(b,\Gamma_2(s))$.
Then we have
\begin{align*}
&d_*(\Gamma_1(ct), \Gamma_2(cs)) \\
& \leq d_*(\Gamma_1(ct), a^{\prime}) + d_*(a^{\prime},b^{\prime}) +d_*(b^{\prime}, \Gamma_2(cs)) \\
& = d_*(\Gamma_1(ct), \Gamma_1(ct^{\prime}) )+ d_*(\Gamma_1(ct^{\prime}),\Gamma_2(cs^{\prime}))+d_*(\Gamma_2(cs^{\prime} ), \Gamma_2(cs)) \\
&\leq c(t-t^{\prime}) + c(2+E) d_*(\Gamma_1(t^{\prime}), \Gamma_2(s^{\prime})) + C + c(s-s^{\prime}) \\
&= cd_*(\Gamma_1(t),\Gamma_1(t^{\prime})) +c(2+E) d_*(\Gamma_1(t^{\prime}), \Gamma_2(s^{\prime})) + C + cd_*(\Gamma_2(s^{\prime}), \Gamma_2(s)) \\
&\leq c(2+E) \{ d_*(\Gamma_1(t), a) + d_*(a,b) + d_*(b, \Gamma_2(s)) \} +C \\
&= c(2+E) d_*(\Gamma_1(t),\Gamma_2(s)) + C. 
\end{align*}
\par 
We consider case IV).
In case IV), we supposed that $\Gamma_1(ct) \in [a, \Gamma_1(t)]$ and $\Gamma_2(cs) \in [b, \Gamma_2(s)]$,
where $a=\Gamma_1(c_1t)$, $b=\Gamma_2(c^{\prime}_1s)$,
and $c_1 \leq c^{\prime}_1$. 
We remark that $c_1 \leq c^{\prime}_1 \leq c$.
\begin{figure}[h]
\begin{tikzpicture}
\draw(0,0)--++(2,1)--++(0,-2)--cycle;
\draw(-2,0)--++(2,0);
\draw(-2,0)node[left]{$\Gamma_1(0)$};
\draw(0,0)node[above]{$p$};
\fill(-2,0)circle(0.06);
\draw(2,1)--++(1,1.5);
\draw(2,1)node[above]{$a$};
\draw(3,2.5)node[above]{$\Gamma_1(t)$};
\fill(3,2.5)circle(0.06);
\draw(2,-1)--++(1,-1.5);
\draw(2,-1)node[right]{$b$};
\draw(3,-2.5)node[below]{$\Gamma_2(s)$};
\fill(3,-2.5)circle(0.06);
\draw(2.5,1.5)node[right]{$\Gamma_1(ct)$};
\fill(2.5,1.75)circle(0.06);
\draw(3,-1.5)node{$\Gamma_2(cs)$};
\fill(2.5,-1.75)circle(0.06);
\end{tikzpicture}
\caption{Case IV)}
\label{III)}
\end{figure}
\vspace{0.5cm}
Then, by inequality \eqref{key}, we have
\begin{align*}
&d_*(\Gamma_1(ct),\Gamma_2(cs))  \\
&=d_*(\Gamma_1(ct),a) + d_*(a,b) + d_*(b, \Gamma_2(cs)) \\
&=d_*(\Gamma_1(ct),\Gamma_1(c_1t)) + d_*(a,b)+ d_*(\Gamma_2(c^{\prime}_1s),\Gamma_2(cs)) \\
&\leq c\left(1-\dfrac{c_1}{c} \right)t + c^{\prime}_1d_*(\Gamma_1(t),\Gamma_2(s)) + c\left( 1-\dfrac{c^{\prime}_1}{c} \right)s \\
&\leq c(1-c_1)t + cd_*(\Gamma_1(t),\Gamma_2(s))+ c(1-c^{\prime}_1)s \\
&= c \{ d_*(\Gamma_1(t),a) + d_*(b,\Gamma_2(s)) \} + cd_*(\Gamma_1(t),\Gamma_2(s)) \\
&\leq c \{2 d_*(\Gamma_1(t),\Gamma_2(s)) \}.
\end{align*}
Therefore, for all $\Gamma_1, \Gamma_2 \in \mathcal{L}_{*}$ with $\Gamma_1:[0,t] \to X*Y$ and $\Gamma_2 :[0,s] \to X*Y$,
\begin{equation}\label{diff}
d_*(\Gamma_1(ct), \Gamma_2(cs)) \leq c(2+E) d_*(\Gamma_1(t),\Gamma_2(s)) + C
\end{equation}
holds for all $c \in [0,1]$.
\par Finally, we will show that there exist $E_{**}\geq 0$ and $C_{**}\geq 0$ such that for all $t^{\prime} \in [0,t]$, $s^{\prime} \in [0,s]$, and $c \in [0,1]$, 
\begin{equation*}
d_*(\Gamma_1(ct^{\prime}), \Gamma_2(cs^{\prime})) \leq cE_{**} d_*(\Gamma_1(t^{\prime}), \Gamma_2(s^{\prime})) + C_{**}.
\end{equation*}
We recall that in general, $\Gamma_1|_{[0,t^{\prime}]}$ is not in $\mathcal{L}_{*}$.
\par
Let $a^{\prime}=\Gamma_1(t^{\prime})$ and $S' \in \mathcal{S}$ such that $a' \in S'$. 
Let $\Gamma_1' \in \mathcal{L}_{*}$ be the geodesic segment from $\Gamma_1(0)$ to $a^{\prime}$.
Since $\Gamma_1$ and $\Gamma^{\prime}_1$ are geodesic segments, we can define $\Gamma^{\prime}_1$ to be $\Gamma^{\prime}_1 : [0,t^{\prime}] \to X*Y$.
Let
\[
T\coloneqq \max \{ t \in [0,t'] \mid \Gamma_1(s)=\Gamma_1'(s) \ \text{for\ any}\ s \in [0,t] \}
\]
We can put $\Gamma_1(T)=\Gamma_1(c_0t')=\Gamma'_1(c_0t')$ for some $c_0 \in [0,1]$. 
Let $p=\Gamma_1(T)$. 
By the definition of $\mathcal{L}_*$, 
we obtain that $p$ is in $S'$.
Moreover, there exist $p^{\prime} \in S'$ satisfying the following conditions:
Let $\gamma_{S'}(p,p^{\prime}) \in \mathcal{L}$ be the geodesic segment 
with $\gamma_{S'}(p,p^{\prime}) : [0,u] \to S' $
and let $\gamma_{S'}(p,a^{\prime}) \in \mathcal{L}$ be the geodesic segment 
with $\gamma_{S'}(p,a^{\prime}) :[0,v] \to S'$.
Such that
\begin{align*}
\Gamma_1([c_0t^{\prime},t^{\prime}]) &\subseteq \gamma_{S'}(p,p^{\prime})([0,u]), \\
\Gamma^{\prime}_1([c_0t^{\prime},t^{\prime}]) &= \gamma_{S'}(p,a^{\prime})([0,v]).
\end{align*}
\par 
Note that $\Gamma_1([0,c_0t^{\prime}]) = \Gamma^{\prime}_1([0,c_0t^{\prime}])$, $u \geq v$, and $a^{\prime}=\Gamma_1(t^{\prime})=\gamma_{S'}(p,p^{\prime})(v)$.
Then for all $c \leq c_0$, we have that $\Gamma_1(ct^{\prime})=\Gamma^{\prime}_1(ct^{\prime})$.
We suppose that $c > c_0$.
Then there exist $c_1 \in [0,1]$ such that
\begin{align*}
\Gamma_1(ct^{\prime})&= \gamma_{S'}(p,p^{\prime})(c_1v), \\
\Gamma^{\prime}_1(ct^{\prime})&=\gamma_{S'}(p,a^{\prime})(c_1v).
\end{align*}
Since $X$ and $Y$ are symmetric geodesic coarsely convex spaces, by the coarsely convex inequality, we have that
\begin{align}
d_*(\Gamma_1(ct^{\prime}), \Gamma^{\prime}_1(ct^{\prime})) &= d_*(\gamma_{S'}(p,p^{\prime})(c_1v),\gamma_{S'}(p,a^{\prime})(c_1v)) \notag \\
                                                                                 &\leq c_1Ed_*(\gamma_{S'}(p,p^{\prime})(v),\gamma_{S'}(p,a^{\prime})(v))+C \notag \\
                                                                                &=c_1Ed_*(a^{\prime},a^{\prime}) + C=C.  \label{1t}
\end{align}
Set $b^{\prime}=\Gamma_2(s^{\prime})$.
We remark that $a=\Gamma_2(0)$.
Let $\Gamma(a,b^{\prime}) \in \mathcal{L}_{*}$ be the geodesic segment from $a$ to $b^{\prime}$ with $\Gamma(a,b^{\prime}) : [0,s^{\prime}] \to X*Y$.
For simplicity, we write $\Gamma(a,b^{\prime})$ as $\Gamma^{\prime}_2$.
By the same argument, we have that for all $c \in [0,1]$,
\begin{equation}\label{2s}
d_*(\Gamma_2(cs^{\prime}), \Gamma^{\prime}_2(cs^{\prime})) \leq C.
\end{equation}
Since $\Gamma^{\prime}_1$, $\Gamma^{\prime}_2 \in \mathcal{L}_{*}$ with $\Gamma^{\prime}_1 :[0,t^{\prime}] \to X*Y$ and $\Gamma^{\prime}_2 : [0,s^{\prime}] \to X*Y$, by \eqref{diff}, \eqref{1t}, and \eqref{2s},
\begin{align*}
&d_*(\Gamma_1(ct^{\prime}), \Gamma_2(cs^{\prime})) \\
&\leq d_*(\Gamma_1(ct^{\prime}),\Gamma^{\prime}_1(ct^{\prime}))+d_*(\Gamma^{\prime}_1(ct^{\prime}), \Gamma^{\prime}_2(cs^{\prime})) +d_*(\Gamma_2^{\prime}(cs^{\prime}), \Gamma_2(cs^{\prime})) \\
& \leq c(2+E) d_*(\Gamma^{\prime}_1(t^{\prime}),\Gamma^{\prime}_2(s^{\prime})) + 3C \\
&=c(2+E) d_*(\Gamma_1(t^{\prime}),\Gamma_2(s^{\prime}))+3C.
\end{align*}
Therefore, we obtain that (sgCC2) holds.
This completes the proof.
\end{proof}
\begin{proof}[Proof of Theorem~\ref{Ma}]
By the construction, the family of geodesic segments 
$\mathcal{L}_{*}$ is symmetric.
Therefore, by Proposition~\ref{sgCC} and~\ref{Propo L}, the free product
$X*Y$ is a symmetric geodesic coarsely convex space, in particular, $X*Y$ is a $(1,0,2+E,9C,\mathcal{L}_{*},\mathrm{id}_{\mathbb{R}_{\geq 0}})$-coarsely convex
space.
\end{proof}

\section{Group actions on free products of metric spaces}\label{S4}
Let $(X,d_X,e_X)$ and $(Y,d_Y,e_Y)$ be metric spaces with base points $e_X$ and $e_Y$, respectively.
Let $G$ and $H$ be groups acting properly and cocompactly on $X$ and $Y$, respectively.
Bridson and Haefliger \cite[Theorem II.11.18]{BH} construct a metric space $\overline{Z}$ 
on which the free product $G*H$ acts properly and cocompactly.
Moreover, when $X$ and $Y$ are CAT(0) spaces, they showed that $\overline{Z}$ is a CAT(0) space. 
In this section, we will show that $\overline{Z}$ is isometric to the free product $X*Y$ with respect to the $G$-net $(G, o(e_X),e_X)$ and the $H$-net $(H, o(e_Y),e_Y)$, where $o(e_X)$ and $o(e_Y)$ are the orbit maps.
\par
First, we briefly review their construction.
Let $\Gamma = G*H$.
Define $Z$ by 
\begin{equation*}
Z \coloneqq (\Gamma \times X) \bigsqcup (\Gamma \times [0,1]) \bigsqcup (\Gamma \times Y). 
\end{equation*}
Let $\overline{Z}$ be the quotient of $Z$ by the equivalent relation generated by:
\begin{align*}
(\omega g, x) &\sim (\omega, g(x)), (\omega h,y) \sim (\omega, h(y)), \\
(\omega, e_X) &\sim (\omega, 0), (\omega, e_Y) \sim (\omega,1)
\end{align*}
for all $\omega \in G*H$, $g \in G$, $h \in H$, $x \in X$, and $y \in Y$.
Let $\overline{X}$ be the quotient of $\Gamma \times X$ by the restriction of the above relation,
and let $\overline{Y}$ be the quotient of $\Gamma \times Y$ by the restriction of the above relation.
We remark that $\overline{X}$ is isometric to $(W_X \cup \{\epsilon\}) \times X$, where $W_X$ is the set of words of $G*H$ such that the last letter of each word of $W_X$ is in $H$,
and $\overline{Y}$ is isometric to $(W_Y \cup \{\epsilon\}) \times Y$, where $W_Y$ is the set of words of $G*H$ such that the last letter of each word of $W_Y$ is in $G$.
\par Let $\omega =u gh $, where $u \in G*H$, $g \in G$, and $h \in H$.
By the above equivalent relation, we have $(\omega, e_X) \sim (\omega, 0)$
and we have
\begin{align*}
(\omega, 1) &\sim (\omega, e_Y), \\
                 &=(ugh, e_Y), \\
                 &\sim (ug, h(e_Y)).
\end{align*}
Let $\tau =v ghg^{\prime} $, where $v \in G*H$, $g, g^{\prime} \in G$, and $h \in H$.
By the above equivalent relation, we have $(\tau, e_Y) \sim (\tau, 1)$
and we have
\begin{align*}
(\tau, 0) &\sim (\tau, e_X), \\
                 &=(ughg^{\prime}, e_X), \\
                 &\sim (ugh, g^{\prime}(e_X)).
\end{align*}
Note that $(\epsilon, e_X) \sim (\epsilon, 0)$ and $(\epsilon, 1) \sim (\epsilon, e_Y)$.
Therefore, $\overline{Z}$ consists of the following two types of components.
\begin{itemize}
\item The sheets consist of $\{\omega\}\times X$ and $\{\tau\} \times Y$, where $\omega \in W_X \cup \{\epsilon\}$ and $\tau \in W_Y \cup \{\epsilon\}$.
\item The edges consist of the following three:
\begin{itemize}
\item There exists an edge $\{\epsilon\} \times [0,1]$ connecting $(\epsilon, e_X) \in \{\epsilon\} \times X$ and $(\epsilon, e_Y) \in \{\epsilon\} \times Y$
\item Let $\omega \in W_X$ and $g \in G$.
        Then there exists an edge $\{\omega\} \times \{g\} \times [0,1]$ connecting $(\omega, g(e_X)) \in \{\omega\} \times X$ and $(\omega g, e_Y) \in \{\omega g\} \times Y$.
\item Let $\tau \in W_Y$ and $h \in H$. 
        Then there exists an edge $\{\tau\} \times \{h\} \times [0,1]$ connecting $(\tau, h(e_Y)) \in \{\tau\} \times Y$ and $(\tau h, e_X) \in \{\tau h\} \times X$.
\end{itemize}
\end{itemize}
Bridson and Haefliger \cite[Theorem II.11.18]{BH} showed that $G*H$ acts properly and cocompactly on $\overline{Z}$.
\par
We can easily show that $\overline{Z}$ is isometric to the free product $X*Y$ with respect to the $G$-net $(G, o(e_X),e_X)$ and the $H$-net $(H, o(e_Y),e_Y)$, where $o(e_X)$ and $o(e_Y)$ are the orbit maps.
Therefore, we have
\begin{proposition}[{\cite[Theorem II.11.18]{BH}}]
\label{prop:GHactsonXY}
Let $X$ and $Y$ be metric spaces.
Let $G$ and $H$ be groups acting properly and cocompactly on $X$ and $Y$, respectively.
We associate $X$ and $Y$ with the $G$-net and the $H$-net, respectively.
Then $G*H$ acts properly and cocompactly on $X*Y$.
\end{proposition}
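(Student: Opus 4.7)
The plan is to prove Proposition~\ref{prop:GHactsonXY} by constructing an explicit isometry $\Phi\colon \overline{Z} \to X*Y$ between the Bridson--Haefliger space and the free product in the sense of Section~1 formed with the $G$-net $(G,o(e_X),e_X)$ and the $H$-net $(H,o(e_Y),e_Y)$, and then transporting the $\Gamma$-action on $\overline{Z}$ (where $\Gamma=G*H$) through $\Phi$. Once the isometry is in hand, the properness and cocompactness of the $\Gamma$-action on $X*Y$ follow immediately from the corresponding statements for $\overline{Z}$, which Bridson--Haefliger have already established.

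To build $\Phi$, I would first observe that every $\Gamma$-orbit in $\Gamma\times X$ contains a unique canonical representative $(\omega,x)$ with $\omega \in W_X \sqcup \{\epsilon\}$ and $x\in X$, obtained by repeatedly applying the relation $(\omega g, x)\sim (\omega, g(x))$ until the last letter of the word no longer belongs to $G$; the symmetric statement holds for $\Gamma\times Y$. Under the identifications $X_0=G$, $Y_0=H$, $\epsilon_X=e_G$, $\epsilon_Y=e_H$, these canonical representatives are exactly the indexing data of the sheets of $X*Y$ in the sense of Section~1. I would define $\Phi$ to map each sheet $\{\omega\}\times X$ of $\overline{Z}$ to the sheet $\{\omega\}\times X$ of $X*Y$ by the identity on the $X$-factor, and analogously on $Y$-sheets; on the edges, $\Phi$ is the identity on the interval factor.

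The core verification is that $\Phi$ respects attaching maps and distances. Under the $G$-net, we have $\overline{g}=g(e_X)$, so the edge $\{\omega\}\times\{g\}\times[0,1]$ of $\overline{Z}$ joins $(\omega,g(e_X))=(\omega,\overline{g})$ to $(\omega g,e_Y)$, which is precisely the attachment prescribed in the definition of $X*Y$; the analogous statement holds for $H$-edges and for the distinguished edge at $\epsilon$. Because $G$ acts by isometries on $X$, the induced metric on a sheet $\{\omega\}\times X$ in $\overline{Z}$ is just $d_X$, and similarly for $Y$-sheets. The metric on $\overline{Z}$ being the quotient path metric, any geodesic between two points in $\overline{Z}$ is a concatenation of sheet-geodesics and unit intervals joined through the attaching points, and running through the three cases of Section~1 shows that the resulting distance formula agrees term-by-term with the case-by-case definition of $d_*$. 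Hence $\Phi$ is an isometry.

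Finally, defining $\gamma\cdot p \coloneqq \Phi(\gamma\cdot\Phi^{-1}(p))$ yields the desired $\Gamma$-action on $X*Y$; properness and cocompactness are preserved under conjugation by an isometry. The principal obstacle is not any single estimate but the bookkeeping in matching canonical normal forms across the two constructions, so that the attaching data and hence the quotient metrics genuinely coincide; after that, the transfer of the group action is automatic.
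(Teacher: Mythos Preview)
Your proposal is correct and follows essentially the same route as the paper: the paper identifies $\overline{Z}$ with $X*Y$ (with the $G$-net and $H$-net) and then cites Bridson--Haefliger for the properness and cocompactness of the $\Gamma$-action on $\overline{Z}$. You spell out the isometry $\Phi$ and the matching of attaching data in more detail than the paper does (the paper simply asserts ``We can easily show that $\overline{Z}$ is isometric to the free product $X*Y$\ldots''), but the underlying argument is the same.
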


\begin{corollary}\label{CCgp}
Let $X$ and $Y$ be symmetric geodesic coarsely convex spaces.
Let $G$ and $H$ be groups acting properly and cocompactly on $X$ and $Y$, respectively.
Then $G*H$ 
is a coarsely convex group.
\end{corollary}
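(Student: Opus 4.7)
The plan is simply to splice together the two main ingredients already established in the paper. First, I would equip $X$ with the $G$-net $(G, o(e_X), e_X)$ and $Y$ with the $H$-net $(H, o(e_Y), e_Y)$ supplied by the orbit maps, as in Example~\ref{eg:net}. The proper action hypothesis is exactly what is needed for $o(e_X)$ and $o(e_Y)$ to be index maps in the sense of Definition~\ref{def:net}: the preimage of a compact set under an orbit map is finite precisely because $\{g \in G : g(B) \cap B \neq \emptyset\}$ is finite for any compact $B$.

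With these nets chosen, I would form the free product $X*Y$. Since by hypothesis $X$ and $Y$ are symmetric geodesic coarsely convex spaces, Theorem~\ref{Ma} immediately yields that $X*Y$ is itself a symmetric geodesic coarsely convex space, and in particular a coarsely convex space.

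To finish, I would invoke Proposition~\ref{prop:GHactsonXY}, which tells us that $G*H$ acts properly and cocompactly on the free product $X*Y$ constructed with respect to the $G$-net and $H$-net. Since a coarsely convex group is by definition a group acting properly and cocompactly on a coarsely convex space, this shows that $G*H$ is a coarsely convex group, as desired.

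There is no real obstacle here; the corollary is a bookkeeping step that packages Theorem~\ref{Ma} together with the action in Proposition~\ref{prop:GHactsonXY}. The only thing worth being careful about is ensuring that the nets used to form $X*Y$ are precisely those with respect to which Proposition~\ref{prop:GHactsonXY} is formulated, so that the action of $G*H$ is genuinely on the same metric space shown to be coarsely convex.
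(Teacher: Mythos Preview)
Your proposal is correct and matches the paper's approach exactly: the corollary is stated without proof in the paper, as an immediate consequence of Theorem~\ref{Ma} and Proposition~\ref{prop:GHactsonXY}, and your argument is precisely this splicing together of the two results using the $G$-net and $H$-net.
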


\section{Application to the coarse Baum--Connes conjecture}
\subsection{Review of the coarse Baum--Connes conjecture}
Let $Y$ be a proper metric space.
We say that $Y$ satisfies the coarse Baum--Connes conjecture if the
following coarse
assembly map $\mu_Y$ of $Y$ is an isomorphism:
\[
 \mu_Y \colon KX_*(Y) \rightarrow K_*(C^*(Y)).
\]
Here, the left-hand side is the \textit{coarse K-homology} of $Y$, and
the right-hand side is the $K$-theory of 
the $C^*$-algebra $C^*(Y)$, called the \textit{Roe algebra} of $Y$.
Both are invariant under coarse equivalence, and the coarse assembly 
map behaves naturally for coarse maps. Therefore we have the following.
\begin{proposition}\label{re1}
Let $X$ and $Y$ be proper metric spaces.
We suppose that $X$ and $Y$ are coarsely equivalent.
If $X$ satisfies the coarse Baum--Connes conjecture, then so does $Y$.
\end{proposition}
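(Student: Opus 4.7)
The plan is to exploit the two coarse invariance properties and the naturality of the coarse assembly map that are explicitly stated just before the proposition. Let $f\colon X\to Y$ be a coarse embedding whose image is $M$-dense in $Y$, which exists by the assumption that $X$ and $Y$ are coarsely equivalent. Such an $f$ is in particular a coarse map between proper metric spaces, so it induces homomorphisms
\begin{equation*}
f_*\colon KX_*(X)\longrightarrow KX_*(Y),\qquad f_*\colon K_*(C^*(X))\longrightarrow K_*(C^*(Y)),
\end{equation*}
and, because coarse K-homology and the K-theory of the Roe algebra are coarse invariants, both of these maps are isomorphisms.

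Next I would invoke the naturality of the coarse assembly map with respect to coarse maps, which yields a commutative square
\begin{equation*}
\begin{array}{ccc}
KX_*(X) & \xrightarrow{\ \mu_X\ } & K_*(C^*(X))\\
\big\downarrow{\scriptstyle f_*} & & \big\downarrow{\scriptstyle f_*}\\
KX_*(Y) & \xrightarrow{\ \mu_Y\ } & K_*(C^*(Y)).
\end{array}
\end{equation*}
By hypothesis $\mu_X$ is an isomorphism, and the two vertical arrows are isomorphisms by the previous step. A standard diagram chase then forces $\mu_Y$ to be an isomorphism as well, which is exactly the statement that $Y$ satisfies the coarse Baum--Connes conjecture.

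There is essentially no hard step here: once the three ingredients \emph{coarse invariance of $KX_*$}, \emph{coarse invariance of $K_*(C^*(\cdot))$}, and \emph{naturality of $\mu$ under coarse maps} are granted (as they are in the paragraph preceding the proposition), the proof is a one-line diagram chase. The only minor point worth mentioning explicitly is that a coarse equivalence in the sense used in the paper (a coarse embedding with $M$-dense image) is indeed a coarse equivalence in the sense required to invoke the invariance theorems, so I would make that identification at the outset and then simply assemble the commutative square and conclude.
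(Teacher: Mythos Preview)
Your argument is correct and is exactly the approach the paper indicates: the paper does not write out a proof at all, but simply notes that $KX_*$ and $K_*(C^*(\cdot))$ are coarse invariants and that $\mu$ is natural for coarse maps, states the proposition as an immediate consequence, and refers to \cite{HR1}, \cite{HRb}, \cite{Yu2} for details. Your commutative-square diagram chase is precisely the one-line argument the paper leaves implicit.
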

For details, see \cite{HR1}, \cite{HRb}, and \cite{Yu2}.
The first author and Oguni show the following. 
\begin{theorem}[{\cite[Theorem 1.3]{FO}}]\label{FO}
Let $X$ be a proper coarsely convex space.
Then $X$ satisfies the coarse Baum--Connes conjecture.
\end{theorem}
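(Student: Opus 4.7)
The plan is to follow the general strategy developed by Higson--Roe for CAT(0) spaces and by Higson for Gromov hyperbolic spaces: construct a coarse compactification of $X$ by an ideal boundary $\partial X$, verify that this compactification has the topological properties required by the descent machinery, and then invoke a general isomorphism theorem that turns such a compactification into an isomorphism on coarse $K$-theory.

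The first step is to construct the ideal boundary. Fix a basepoint $x_0 \in X$. By (CC1) there are good quasi-geodesic segments in $\mathcal{L}$ connecting $x_0$ to every other point; passing to a limit one obtains good quasi-geodesic rays $\gamma \colon [0,\infty) \to X$ based at $x_0$. Define $\partial X$ to be the set of equivalence classes of such rays under the relation of bounded Hausdorff distance. Condition (CC2) is exactly what makes this relation well-behaved: two rays from $x_0$ whose endpoints at parameter $t$ are close stay close after rescaling by any $c \in [0,1]$, up to the additive error $C$. Condition (CC3) controls the time parameter from the spatial displacement, which ensures that limit points are genuine boundary points rather than being smeared across $\partial X$. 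One then topologizes the bordification $\bar X = X \sqcup \partial X$ by a cone/shadow topology, checking that it is compact and metrizable, and that the restriction to $X$ of any continuous function on $\bar X$ is a Higson function — so $\partial X$ is a Higson-type corona.

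The second step is to exploit the system $\mathcal{L}$ to construct a ``coarse geodesic flow'' $\Phi_c \colon \bar X \to \bar X$ by rescaling parameters of the good quasi-geodesics from $x_0$. This coarse flow plays the role of the geodesic contraction used in the CAT(0) argument: it provides a coarse deformation retraction of $\bar X$ onto $\{x_0\}$, which yields uniform contractibility of $X$ and supplies the homotopy data feeding into the descent. Together with metrizability and finite-dimensionality of the corona, this puts $\bar X$ into the class of compactifications for which general machinery applies.

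With these ingredients, one invokes the Higson--Roe isomorphism theorem (or the equivalent descent principle): a proper metric space admitting a sufficiently nice contractible coarse compactification satisfies the coarse Baum--Connes conjecture, so $\mu_X \colon KX_*(X) \to K_*(C^*(X))$ is an isomorphism. The main obstacle, and the true technical heart of the argument, lies in the second step. The coarsely convex inequality (CC2) only provides weak convexity, with a multiplicative constant $E \ge 1$ and an additive slack $C \ge 0$, so the coarse flow $\Phi_c$ is only approximately a deformation retraction. Propagating these additive and multiplicative errors carefully through the definition of $\partial X$, the cone topology on $\bar X$, and the required Higson-function estimates is what extends the classical CAT(0) and hyperbolic arguments to the full coarsely convex setting.
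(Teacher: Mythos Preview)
This theorem is not proved in the paper at all: it is stated as a citation of \cite[Theorem~1.3]{FO} and then used as a black box to deduce Theorem~\ref{FM2}. There is therefore no proof here to compare your proposal against.

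For what it is worth, your outline is broadly in the spirit of the argument in the cited Fukaya--Oguni paper, but the packaging is slightly different from what you describe. The mechanism in \cite{FO} is not descent through a Higson-type corona and a contractible compactification in the way you phrase it. Rather, one constructs the ideal boundary $\partial X$ (using (CC1)--(CC3) much as you indicate), shows it is compact metrizable, and then proves that $X$ is \emph{coarsely homotopy equivalent} to the Euclidean open cone $\mathcal{O}\partial X$. The coarse Baum--Connes conjecture is known for open cones over compact metrizable spaces by work of Higson--Roe, and coarse homotopy invariance of both sides of the assembly map then transports the isomorphism to $X$. Your ``coarse geodesic flow'' $\Phi_c$ is essentially the radial map that realises this coarse homotopy equivalence, so the ingredients you identify are the right ones; only the final invocation (open cones and coarse homotopy invariance, rather than a generic ``contractible compactification $\Rightarrow$ isomorphism'' principle) differs.
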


We will apply Theorem~\ref{FO} for free products of 
proper symmetric geodesic coarsely convex spaces.

\begin{lemma}\label{FM}
Let $X$ and $Y$ be proper metric spaces with nets.
Then the free product $X*Y$ is a proper metric space.
\end{lemma}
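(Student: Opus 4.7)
The plan is to show that every closed ball $\overline{B}(p_0,R)$ in $X*Y$ is compact; this suffices for properness because every bounded closed subset is closed inside such a ball. By the triangle inequality it is enough to treat the case $p_0=(\epsilon,e_X)$, since $\overline{B}(q,R)\subset \overline{B}((\epsilon,e_X),\,R+d_*(q,(\epsilon,e_X)))$.

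First, I would bound which sheets can meet $\overline{B}(p_0,R)$. Unwinding the distance formula in cases \ref{item:w-sub-t}--\ref{III} for a sheet $\omega X$ with $\omega=w_0w_1\cdots w_n$ and any $v$ with $(\omega,v)\in \overline{B}(p_0,R)$ yields a lower bound of the form
\[
 d_*(p_0,(\omega,v))\;\geq\; \sum_{i=0}^{n}\norm{\overline{w_i}}\;+\;(n+1)\;+\;\norm{v},
\]
and the analogous estimate holds for sheets of the form $\tau Y$. Therefore $n\leq R-1$, $\norm{v}\leq R$, and $\norm{\overline{w_i}}\leq R$ for every $i$. Since $X$ and $Y$ are proper, the closed balls of radius $R$ around $e_X$ and $e_Y$ are compact, so the net condition of Definition~\ref{def:net} forces $i_X^{-1}$ and $i_Y^{-1}$ of these balls to be finite. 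Together with the bound on $n$ this leaves only finitely many admissible index words $\omega$ (and analogously $\tau$).

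Next, I would observe that within a single sheet case~\ref{item:w=t} of the distance definition gives $d_*|_{\omega X\times \omega X}=d_X$ under the identification $\omega X\cong X$, so each sheet is an isometric copy of $X$ (or $Y$). Hence $\overline{B}(p_0,R)\cap \omega X$ is a closed subset of $X$ of diameter at most $2R$ and is therefore compact by properness of $X$. The edges are isometric to $[0,1]$ and hence automatically compact; an edge meeting $\overline{B}(p_0,R)$ has an endpoint in $\overline{B}(p_0,R+1)$, which by the previous paragraph lies in one of finitely many sheets. Within each such sheet the endpoints take the form $(\omega,\overline{x_0})$ with $\overline{x_0}$ in a bounded subset of $X$, so the net condition restricts $x_0$ to finitely many possibilities, yielding finiteness of the edges that meet $\overline{B}(p_0,R)$.

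Combining these observations, $\overline{B}(p_0,R)$ is a finite union of compact subsets of the Hausdorff space $X*Y$, hence compact. The main obstacle is the first step: the distance $d_*$ is defined piecewise according to the common-prefix structure of the index words (cases~\ref{item:w=t}, \ref{item:w-sub-t}, \ref{III}), and the displayed lower bound must be verified subcase by subcase. Once that estimate is in hand, the remaining finiteness and compactness statements follow from routine applications of properness of $X,Y$ and the finiteness built into the net condition.
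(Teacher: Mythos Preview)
Your proposal is correct and follows essentially the same approach as the paper: bound the heights of the sheets meeting the ball, use the net condition together with properness of $X$ and $Y$ to conclude that only finitely many sheets (and edges) are involved, and then write the ball as a finite union of compact pieces. Your version is in fact more careful than the paper's---you make the distance lower bound explicit, you justify finiteness of the admissible index words letter by letter, and you treat the edges separately, whereas the paper's proof is a terse paragraph that leaves these details implicit.
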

\begin{proof}
Let $(X,d_X)$ and $(Y,d_Y)$ be proper metric spaces .
Let $(X_0, i_X, e_X)$ and $(Y_0, i_Y, e_Y)$ be nets of $X$ and $Y$, respectively.
Set $a \in X*Y$ and $R\geq0$.
Let $n$ be the height of the sheet that contains $a \in X*Y$.
We define $B_*(a,R)$ to be
\begin{equation*}
 B_*(a,R) \coloneqq \{ b \in X*Y \mid d_*(a,b) \leq R \}.
\end{equation*}
By the definition of nets, 
the height of the sheets that intersect $B_*(a,R)$ is in the interval $\left[n- [R]-1 , n+ [R]+1\right]$.
Since $X$ and $Y$ are proper metric spaces, by the definition of nets,  for any bounded closed subset $K$ in each sheet, the preimage of $K$ by the index maps is a finite set.
Therefore, $B_*(a,R)$ is the finite union of bounded closed subsets in the sheets.
Since $X$ and $Y$ are proper metric spaces, $B_*(a,R)$ is compact.
\end{proof}
Combining Theorem \ref{Ma}, Theorem \ref{FO}, and Lemma \ref{FM}, we obtain the following.
\begin{theorem}\label{FM2}
Let $X$ and $Y$ be proper metric spaces with nets. 
If $X$ and $Y$ are symmetric geodesic coarsely convex spaces,
then the free product $X*Y$
satisfies the coarse Baum--Connes conjecture.
\end{theorem}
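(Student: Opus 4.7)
The plan is to chain together the three earlier results, since this theorem is essentially a corollary. First, I would invoke Theorem~\ref{Ma} to upgrade the hypothesis: because $X$ and $Y$ are symmetric geodesic coarsely convex spaces (and carry nets by assumption), their free product $X*Y$ is itself a symmetric geodesic coarsely convex space, and in particular a coarsely convex space in the sense of the definition given in Section~3.

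Next, I would apply Lemma~\ref{FM} to conclude that $X*Y$ is a proper metric space; this uses exactly the hypothesis that $X$ and $Y$ are proper together with the net structure. Combining these two facts yields that $X*Y$ is a \emph{proper coarsely convex space}, which is precisely the hypothesis of Theorem~\ref{FO}.

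Finally, I would apply Theorem~\ref{FO} of the first author and Oguni to deduce that the coarse assembly map
\begin{equation*}
\mu_{X*Y}\colon KX_*(X*Y) \longrightarrow K_*(C^*(X*Y))
\end{equation*}
is an isomorphism, i.e., $X*Y$ satisfies the coarse Baum--Connes conjecture. There is no real obstacle here: all the substantive work has been done in Theorem~\ref{Ma} (the geometric construction of the metric and the family $\mathcal{L}_*$, together with verification of (sgCC1) and (sgCC2)) and in the input Theorem~\ref{FO}; the present statement is a one-line synthesis of Theorem~\ref{Ma}, Lemma~\ref{FM}, and Theorem~\ref{FO}. The only thing to be mildly careful about is making explicit that ``symmetric geodesic coarsely convex'' implies ``coarsely convex'' in the sense required by Theorem~\ref{FO}, which follows immediately from Proposition~\ref{sgCC}.
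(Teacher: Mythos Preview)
Your proposal is correct and matches the paper's own proof exactly: the paper simply states that the result follows by combining Theorem~\ref{Ma}, Lemma~\ref{FM}, and Theorem~\ref{FO}. Your additional remark that Proposition~\ref{sgCC} is what bridges ``symmetric geodesic coarsely convex'' to ``coarsely convex'' is a nice bit of extra care that the paper leaves implicit.
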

Let $G$ and $H$ be finitely generated groups acting properly and cocompactly on $X$ and $Y$, respectively.
As mentioned in Section \ref{S4}, the free product $G*H$ acts properly and cocompactly on $X*Y$ with respect to the $G$-net and the $H$-net.
Therefore, combining Theorem \ref{FM2}, the \v{S}varc--Milnor Lemma, and Proposition \ref{re1}, we obtain the following result.
\begin{theorem}\label{FM3}
Let $X$ and $Y$ be proper metric spaces with nets.
We suppose that $X$ and $Y$ are symmetric geodesic coarsely convex spaces.
Let $G$ and $H$ be groups acting properly and cocompactly on $X$ and $Y$, respectively.
Then the free product $G*H$ satisfies the coarse Baum--Connes conjecture.
\end{theorem}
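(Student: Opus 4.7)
The plan is to assemble the previously established results into a short chain of implications; there is no new geometric content to prove. First I would invoke Proposition~\ref{prop:GHactsonXY} to convert the group-theoretic hypothesis into geometric data: associating $X$ with its $G$-net $(G,o(e_X),e_X)$ and $Y$ with its $H$-net $(H,o(e_Y),e_Y)$, the free product $G*H$ acts properly and cocompactly on the metric space $X*Y$. The action is automatically isometric because the gluing defining $X*Y$ is $G$- and $H$-equivariant on the corresponding sheets, and the edges are permuted by word multiplication.

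Next I would apply Theorem~\ref{FM2} to the space $X*Y$ itself. The hypotheses of Theorem~\ref{FM2} hold because $X$ and $Y$ are proper metric spaces with nets and are symmetric geodesic coarsely convex by assumption; hence $X*Y$ satisfies the coarse Baum--Connes conjecture. Internally, this step packages together Lemma~\ref{FM} (properness of $X*Y$), Theorem~\ref{Ma} (that $X*Y$ is symmetric geodesic coarsely convex), and Theorem~\ref{FO} of the first author and Oguni (the conjecture for proper coarsely convex spaces).

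The third ingredient is the \v{S}varc--Milnor Lemma. Since $X*Y$ is a proper geodesic metric space and $G*H$ acts on it properly, cocompactly, and by isometries, the \v{S}varc--Milnor Lemma implies that $G*H$ is finitely generated and that, equipped with any word metric coming from a finite generating set, $G*H$ is quasi-isometric to $X*Y$. In particular, $G*H$ is coarsely equivalent to $X*Y$. Finally, Proposition~\ref{re1} transfers the coarse Baum--Connes conjecture from $X*Y$ back to $G*H$, completing the proof.

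The main obstacle, which is really only a bookkeeping point, is to verify that the hypotheses of the \v{S}varc--Milnor Lemma are indeed in place. Properness of $X*Y$ is Lemma~\ref{FM}; the geodesic property is immediate from the construction of $d_*$, which concatenates geodesic segments on sheets with unit-interval edges; and the isometric character of the $G*H$-action is built into the construction of $X*Y$ from the $G$-net and $H$-net. Once these three points are noted, the theorem follows by chaining Proposition~\ref{prop:GHactsonXY}, Theorem~\ref{FM2}, the \v{S}varc--Milnor Lemma, and Proposition~\ref{re1}.
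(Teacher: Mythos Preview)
Your proposal is correct and follows exactly the paper's argument: invoke Proposition~\ref{prop:GHactsonXY} to get the proper cocompact action of $G*H$ on $X*Y$, apply Theorem~\ref{FM2} to conclude the conjecture for $X*Y$, and then transfer it to $G*H$ via the \v{S}varc--Milnor Lemma and Proposition~\ref{re1}. Your additional verification of the \v{S}varc--Milnor hypotheses (properness from Lemma~\ref{FM}, geodesicity from the construction of $d_*$, isometry of the action) is a helpful elaboration that the paper leaves implicit.
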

Finally, we compare Theorem \ref{FM2} and Theorem \ref{FM3} with some known results
for relatively hyperbolic groups, and spaces admitting a coarse embedding 
into a Hilbert space.

\subsection{Relatively hyperbolic groups}
The first author and Oguni \cite{FOrel} showed the following.
\begin{theorem}[{\cite[Theorem 1.1]{FOrel}}]\label{fmrel}
Let $G$ be a finitely generated group and $\mathbb{P}=\{P_1,\dots,P_k\}$ be a finite family of infinite subgroups.
Suppose that $(G,\mathbb{P})$ is a relatively hyperbolic group.
If each subgroup $P_i$ satisfies the coarse Baum--Connes conjecture, 
and admits a finite $P_i$-simplicial complex which is a universal space for proper actions, 
then $G$ satisfies the coarse Baum--Connes conjecture.
\end{theorem}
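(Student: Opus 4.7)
The plan is to pass through the Groves--Manning cusped space $\widehat{X}(G,\mathbb{P})$, obtained by attaching a combinatorial horoball to each left coset $gP_i$ of a peripheral subgroup in the Cayley graph of $G$. By the Groves--Manning characterization of relative hyperbolicity, $\widehat{X}(G,\mathbb{P})$ is a proper geodesic Gromov hyperbolic space. In particular, it is a proper coarsely convex space (see Table~\ref{Tab1}), so Theorem~\ref{FO} already gives the coarse Baum--Connes conjecture for $\widehat{X}(G,\mathbb{P})$. The task is therefore to transfer this conclusion from the cusped space down to $G$ equipped with its word metric.

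To effect this transfer, I would set up a coarse Mayer--Vietoris decomposition of $\widehat{X}(G,\mathbb{P})$ into the thick part (a coarse copy of $G$) and the disjoint union of horoballs, whose pairwise coarse intersections are coarsely equivalent to the family of horospheres, that is, to the disjoint union $\bigsqcup_{g,i}\, gP_i$ of peripheral cosets. In the associated coarse Mayer--Vietoris six-term exact sequence, both for coarse $K$-homology $KX_*$ and for $K$-theory of the Roe algebra $K_*(C^*(-))$, the coarse assembly map $\mu$ is natural. A five-lemma style argument then reduces coarse Baum--Connes for $G$ to coarse Baum--Connes for the horoballs and for the disjoint union of horospheres, given the conclusion already obtained for $\widehat{X}(G,\mathbb{P})$.

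For the horoball term, combinatorial horoballs are coarsely contractible (they admit Lipschitz contractions onto their ray at infinity), which lets one compute both sides of the assembly map directly and verify the isomorphism by hand. For the horosphere term, the hypothesis that each $P_i$ satisfies the coarse Baum--Connes conjecture handles each individual coset, while the additional hypothesis that each $P_i$ admits a finite simplicial model $\underline{E}P_i$ for proper actions is used to identify the coarse $K$-homology and Roe $K$-theory of the disjoint union $\bigsqcup_{g,i} gP_i$ in a $G$-equivariantly controlled way, so that the assembly map on the disjoint union reduces to the assembly maps on the $P_i$ themselves. This is the point at which the finiteness of $\underline{E}P_i$ is genuinely needed, since it provides uniform bounds required to assemble countably many coset pieces inside a single Roe algebra.

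The main obstacle is the coarse Mayer--Vietoris step together with its compatibility with the assembly map across an infinite disjoint union of cusps: one must construct excisive decompositions both in $KX_*$ and in $K_*(C^*(-))$, verify naturality of the connecting maps, and control the contribution of infinitely many horoballs by means of the finite $\underline{E}P_i$'s. Once this coarse Mayer--Vietoris machinery is established and the horoball and horosphere terms are analyzed as above, the five-lemma closes the argument and yields the coarse Baum--Connes conjecture for $G$.
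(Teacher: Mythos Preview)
This theorem is not proved in the present paper; it is quoted verbatim from \cite{FOrel} as a known result, used only for comparison with Theorems~\ref{FM2} and~\ref{FM3}. There is therefore no ``paper's own proof'' to set your proposal against.

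For what it is worth, your outline is a reasonable summary of the strategy behind \cite{FOrel}: pass to the Groves--Manning cusped space, use its Gromov hyperbolicity to obtain the coarse Baum--Connes conjecture there, and then descend to $G$ via a coarse Mayer--Vietoris argument separating the thick part from the horoballs, with the horosphere pieces controlled by the hypotheses on the $P_i$. Two minor remarks. First, the correct mechanism for the horoball terms is that combinatorial horoballs are \emph{flasque}: they carry a proper Lipschitz self-map at bounded distance from the identity that pushes points toward infinity, which forces both $KX_*$ and $K_*(C^*(-))$ to vanish; ``coarsely contractible'' is not quite the standard formulation here. Second, invoking Theorem~\ref{FO} for the cusped space is logically fine but anachronistic relative to \cite{FOrel}; the hyperbolic case of the coarse Baum--Connes conjecture was already available (Higson--Roe) and is what is used in the original reference.
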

Let $G$ and $H$ be finitely generated groups.
The free product $G*H$ is hyperbolic relative to $\{G,H\}$.
If $G$ and $H$ act properly and cocompactly on any spaces listed in Table \ref{Tab1},
then $G$ and $H$ admit finite $G$-simplicial (resp. $H$-simplicial) complexes which are universal spaces for proper actions.
Therefore, $G*H$ satisfies the assumptions of Theorem \ref{fmrel}.
However, it is not known in general whether groups acting properly and cocompactly on symmetric geodesic coarsely convex spaces 
always admit finite $G$-simplicial complexes which are universal spaces for proper actions.

\subsection{Spaces admitting a coarse embedding into a Hilbert space}
The notion of coarse embedding into a Hilbert space is introduced by
Gromov~\cite{asym_invs}.
Yu \cite{Yu1} showed the following.
\begin{theorem}[{\cite[Theorem 1.1]{Yu1}}]\label{YuH}
Let $\Gamma$ be a discrete metric space with bounded geometry.
If $\Gamma$ admits a uniform embedding into a Hilbert space,
then the coarse Baum--Connes conjecture holds for $\Gamma$.
\end{theorem}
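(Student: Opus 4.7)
The plan is to follow Yu's original strategy, which uses the uniform embedding $f \colon \Gamma \to H$ as a geometric anchor from which infinite-dimensional Bott periodicity can trivialize the $K$-theoretic obstructions at infinity. The central idea is to twist the Roe algebra of $\Gamma$ by a $C^*$-algebra $\mathcal{A}(H)$ encoding the Clifford/Bott geometry of the Hilbert space, prove that the corresponding twisted coarse assembly map is an isomorphism, and then transfer this back to the untwisted setting via a Bott homomorphism that is itself a $K$-theory equivalence.

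First I would construct $\mathcal{A}(H)$ in the style of Higson--Kasparov--Trout, as a $\mathbb{Z}/2$-graded inductive limit of tensor products of Clifford algebras of finite-dimensional affine subspaces of $H$ with Schwartz-type function algebras; infinite-dimensional Bott periodicity then yields $K_*(\mathcal{A}(H)) \cong K_*(\mathrm{pt})$. Using $f$ and $\mathcal{A}(H)$, I would assemble a twisted Roe algebra $C^*(\Gamma;\mathcal{A}(H))$ and a twisted coarse assembly map $\mu_\Gamma^{\mathrm{tw}}$, together with a Bott-type homomorphism $\beta \colon C^*(\Gamma) \to C^*(\Gamma;\mathcal{A}(H))$ intertwining $\mu_\Gamma$ with $\mu_\Gamma^{\mathrm{tw}}$. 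The bounded geometry hypothesis on $\Gamma$ is essential here for making these constructions well-defined at the level of propagation-controlled operators with uniformly locally finite matrix coefficients.

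The core analytic step is to show that $\mu_\Gamma^{\mathrm{tw}}$ is an isomorphism. This would be carried out by a Mayer--Vietoris cutting-and-pasting argument over covers of $\Gamma$ of uniformly bounded complexity, reducing the computation on each piece to finite-dimensional Bott periodicity and then passing to the limit through the continuous-field structure of $\mathcal{A}(H)$ over $H$. Combined with the fact that $\beta$ induces a $K$-theory isomorphism (again by infinite-dimensional Bott periodicity), this yields that $\mu_\Gamma$ itself is an isomorphism. The main obstacle is this isomorphism of $\mu_\Gamma^{\mathrm{tw}}$: controlling the twisting asymptotically in $H$ and gluing the Mayer--Vietoris pieces without losing $K$-theoretic information requires the full machinery of asymptotic morphisms and $E$-theory, and this is where Yu's technical innovations are concentrated.
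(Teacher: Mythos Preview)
The paper does not prove this theorem at all: it is quoted verbatim as \cite[Theorem 1.1]{Yu1} and used as a black box in the discussion of Section~5.2, so there is no ``paper's own proof'' to compare your attempt against. Your outline is a faithful high-level summary of Yu's original argument (the Higson--Kasparov--Trout algebra $\mathcal{A}(H)$, the twisted Roe algebra and twisted assembly map, the Dirac--dual-Dirac/Bott mechanism, and the cutting-and-pasting reduction), which is exactly the right thing to point to here; just be aware that in the context of this paper nothing more than the citation is expected.
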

Let $X$ and $Y$ be proper metric spaces.
Let $G$ and $H$ be groups acting properly and cocompactly on $X$ and $Y$, respectively.
We suppose that $X$ and $Y$ admit coarse embeddings into Hilbert spaces.
Then $G$ and $H$ also admit coarse embeddings into Hilbert spaces.
By the work of Dadarlat--Guentner \cite{DG}, the free product $G*H$ embeds coarsely into the Hilbert space.
By Theorem \ref{YuH}, the free product $G*H$ satisfies the coarse Baum--Connes conjecture.
We remark that $G*H$ acts properly and cocompactly on $X*Y$.
Then, by the \v{S}varc--Milnor Lemma and Proposition \ref{re1}, 
it follows that $X*Y$ satisfies the coarse Baum--Connes conjecture.
\par 
We remark that in the above setting, all $X$, $Y$, and $X*Y$ are with
bounded coarse geometry in the sense of \cite[Definition A.2]{Busemann_cBC}.
However, there exist symmetric geodesic coarsely
convex spaces without bounded coarse geometry.

\begin{example}
 Let $\Gamma$ be the Cayley graph of $\Z/2\Z * \Z/3\Z$ for some
 generating set.  Since $\Z/2\Z * \Z/3\Z$ is a hyperbolic group,
 $\Gamma$ is a symmetric geodesic coarsely convex space.

 For $p\in (0,\infty)$, let $X_p$ be the proper Busemann space given in
 \cite[Example 2.2]{Busemann_cBC}.
 As described in \cite{Busemann_cBC}, $X_p$ is constructed
 from the half-line $[0,\infty)$ by identifying each integer 
 $n\in [0,\infty)$ with the origin of the $n$-dimensional $l_p$ space.
 By Theorem~\ref{FM2}, the free product $\Gamma * X_p$ satisfies 
 the coarse Baum--Connes conjecture.
 
 In \cite[Appendix]{Busemann_cBC}, it is shown that
 $X_p$ is without bounded coarse geometry. 
 Thus $\Gamma * X_p$ does not satisfy the assumptions of Theorem~\ref{YuH}.
 
 It also follows that $X_p$ does not admit any proper cocompact actions by
 discrete groups.  Therefore we cannot apply Theorem~\ref{fmrel} 
 to $\Gamma * X_p$.
\end{example}

Using expander graphs, Kondo \cite{Kondo} constructed a CAT(0) space
which is not coarsely embeddable into a Hilbert space. The space given in
\cite{Kondo} is not proper, however, with a slight modification, we
obtain a proper CAT(0) space which is not coarsely embeddable into
a Hilbert space. Following Kondo's argument \cite{Kondo}, we give the
construction of the space.

First, we briefly review expander graphs.
Let $G=(V,E)$ be a graph.
For $A \subset V$, we denote by $\partial^e A$, the set of edges connecting a vertex in $A$ and a vertex in $A^c$, that is,
\[ \partial^e A \coloneqq \{ e \in E : \#(e \cap A)=1\}.\]
The \textit{Cheeger constant} $h(G)$ is defined by
\[ h(G) \coloneqq \min \left\{ \frac{\# \partial^e A}{\# A} : A \subset G, 0 < \# A \leq \frac{\# V}{2} \right\}.\]
The \textit{girth} of $G$ is the length of the shortest embedded cycle contained in $G$, denoted by $\text{girth}(G)$.
\par A family of  \textit{expander graphs} is a sequence of finite connected graphs $\{ G_n=(V_n,E_n) \}_{n=1}^{\infty}$ satisfying the following conditions:
\begin{enumerate}
\item $\# V_n \to \infty \quad (n \to \infty).$
\item There exists $k \in \mathbb{N}$ such that for any $n \in \mathbb{N}$ and $v \in V_n$, the degree of $v$ is less than or equal to $k$.
\item There exists $c >0$ such that $h(G_n) >c$ for any $n \in \mathbb{N}$.
\end{enumerate}
We remark that a family of expander graphs is not coarsely embeddable
into Hilbert space.

Let a sequence of finite connected $k$-regular graphs $\{ G_n=(V_n,E_n) \}_{n=1}^{\infty}$ 
form a family of expander graphs satisfying $\text{girth}(G_n) \to \infty (n \to \infty)$
while 
\[
\frac{\mathrm{diam}(G_n)}{\mathrm{girth}(G_n)}
\]
remains bounded. 
Let $\rho_n$ be the combinatorial distance on $G_n$ and let 
\[ d_n \coloneqq \frac{ 2 \pi} { \text{girth} (G_n)}\rho_n. \]
Then each $(G_n, d_n)$ is a CAT(1) space since we are considering the scaled distance $d_n$ on $G_n$ and the length of the shortest embedded circle in it is 2$\pi$.
\par For each $G_n$, let $\widetilde{G_n} \coloneqq G_n \times \mathbb{R}_{\geq 0} / G_n \times \{0\}$. 
We construct a metric space $Y_0$ from the half-line $[0,\infty)$ by identifying each integer $n\in [0,\infty)$ with $G_n \times \{0\}$ of $\widetilde{G_n}$
and the distance of $Y_0$ is defined by 
\[ d_{Y_0}((v_1,r_1),(v_2,r_2))^2 \coloneqq r_1^2 + r_2^2 - 2 r_1 r_2 \cos (\min \{d_n(v_1,v_2), \pi\}),\]
when $v_1$ and $v_2$ are in the same $G_n$, and
\[ d_{Y_0}((v_1,r_1),(v_2,r_2))=r_1+r_2+|m-n|,\]
when $v_1 \in G_n, v_2 \in G_m$ for $n \neq m$.
It is clear that $(Y_0, d_{Y_0})$ is proper.
For each $n \in \mathbb{N}$, the metric space $(\widetilde{G_n}, d_{Y_0} |_{\widetilde{G_n}})$ is the Euclidean cone over $G_n$.
Since $G_n$ is a CAT(1) space, by Berestovskiĭ's theorem \cite{Eucc} (see also \cite[Theorem II.3.14]{BH}), 
the Euclidean cone $(\widetilde{G_n}, d_{Y_0} |_{\widetilde{G_n}})$ is a CAT(0) space.
Therefore, by using the gluing lemma \cite[Theorem II.11.3]{BH} repeatedly, it follows that $(Y_0,d_{Y_0})$ is a proper CAT(0) space. 
Since $(Y_0,d_{Y_0})$ contains a bi-Lipschitz embedded family of expanders
\cite[Proposition 4.4 and Remark 4.6]{Kondo}, 
this space is not coarsely embeddable into a Hilbert space.

\begin{remark}
Such a family of expanders is obtained from the Ramanujan graphs constructed by Lubotzky et al. \cite{Lu}.
\end{remark}

\begin{example}\label{Kondo}
Let $\Gamma$ be the Cayley graph of $\Z/2\Z * \Z/3\Z$ for some
 generating set.  Since $\Z/2\Z * \Z/3\Z$ is a hyperbolic group,
 $\Gamma$ is a symmetric geodesic coarsely convex space.
 We remark that $\Gamma$ is not a CAT(0) space.
 By Theorem~\ref{FM2}, the free product $\Gamma * Y_0$ satisfies 
 the coarse Baum--Connes conjecture.
 \par The free product $\Gamma * Y_0$ is neither CAT(0) nor 
 coarsely embeddable into any Hilbert space.
 Thus, $\Gamma * Y_0$ does not satisfy the assumptions of Theorem~\ref{YuH}.
\end{example}

\section*{Acknowlegement}
We appreciate the referee for his or her careful reading of the manuscript and valuable comments.
Our manuscript has been improved by his or her comments.
We would like to thank Professor Shinichi Oguni for very helpful discussion.
We would also like to thank Professor Takashi Shioya for very helpful comments on shapes of free products,
which led us to Example \ref{eg:finite_net}.
We are deeply grateful to Professor Takefumi Kondo for suggesting the construction of the proper CAT(0) space in Example \ref{Kondo}.
\par
T. Fukaya was supported by JSPS KAKENHI Grant number JP19K03471. T. Matsuka was supported by JST, the establishment of university fellowships towards the creation of science technology innovations, Grant number JPMJFS2139.

\bibliographystyle{plain}
\bibliography{myrefs}

\end{document}